\def\vol{\mathop{\hbox{vol}}}
\def\det{\mathop{\hbox{det}}}
\newcommand{\real}{\Bbb{R}}
\newcommand{\complex}{\Bbb{C}}
\newcommand{\C}{\Bbb{C}}
\newcommand{\projective}{\Bbb{P}}
\newcommand{\intero}{\Bbb{Z}}
\newcommand{\K}{K\"{a}hler}
\newtheorem{teor}{Theorem}[section]
\newtheorem{defin}[teor]{Definition}
\newtheorem{remar}[teor]{Remark}
\newtheorem{prop}[teor]{Proposition}
\newtheorem{corol}[teor]{Corollary}
\newtheorem{lemma}[teor]{Lemma}
\newtheorem{Example}[teor]{Example}
\begin{document}

\title[Symplectic and K\"{a}hler geometry of toric varieties]{Some remarks on the symplectic and K\"{a}hler geometry of toric varieties}

\author{Claudio Arezzo}
\address{Abdus Salam International Center for Theoretical Physics \\
                  Strada Costiera 11 \\
         Trieste (Italy) and Dipartimento di Matematica \\
         Universit\`a di Parma \\
         Parco Area delle Scienze~53/A  \\
         Parma (Italy)}
\email{arezzo@ictp.it}

\author{Andrea Loi}
\address{(Andrea Loi) Dipartimento di Matematica \\
         Universit\`a di Cagliari (Italy)}
         \email{loi@unica.it}

\author{Fabio Zuddas}
\address{(Fabio Zuddas) Dipartimento di Matematica e Informatica \\
          Via delle Scienze 206 \\
         Udine (Italy)}
\email{fabio.zuddas@uniud.it}

\thanks{
The second author was  partially supported by Prin 2010/11 -- Variet\`a reali e complesse: geometria, topologia e analisi armonica -- Italy; the first and third authors were partially supported by the FIRB Project ``Geometria Differenziale Complessa e Dinamica Olomorfa''.}
\subjclass[2000]{32L07, 53C25, 53C55.} 
\keywords{ \K\ metrics; \K\--Einstein metrics;
compactifications of $\C^n$; diastasis; Bochner's coordinates; toric varieties.}

\begin{abstract}
Let $M$ be a projective toric manifold. We prove two results concerning respectively \K-Einstein submanifolds of $M$ and symplectic embeddings of the standard euclidean ball in $M$. Both results use the well-known fact that $M$ contains an open dense subset biholomorphic to $\complex^n$.
\end{abstract}
 
\maketitle

\section{Introduction and statements of the main results}\label{Introduction}

In this paper we use the well-known fact that toric manifolds are compactifications of $\complex^n$ in order to prove two results, of Riemannian and symplectic nature, given by the following two theorems.

\begin{teor} 
Let $N$ be a projective toric manifold equipped with a toric \K \ metric $G$ and $(M, g) \xhookrightarrow{\phi} (N, G)$ be an isometric embedding of a \K -Einstein manifold such that $\phi(M)$ contains a point of $N$ fixed by the torus action. Then $(M, g)$ has positive scalar curvature.
\end{teor}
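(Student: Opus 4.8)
The plan is to work entirely in the affine chart provided by the fixed point and to transport the geometry of $(M,g)$ there by means of Calabi's diastasis. Since $N$ is a smooth projective toric manifold, the torus-fixed point $p_0\in\phi(M)$ has a neighbourhood $U$, biholomorphic to $\complex^n$ (here $n=\dim_{\complex}N$), which is open and dense and on which $G$ admits a torus-invariant potential $\Phi=F(|z_1|^2,\dots,|z_n|^2)$. First I would record that, after rescaling the coordinates so that $g(p_0)=\mathrm{Id}$ (that is $F_{x_j}(0)=1$ for all $j$), the diastasis of $G$ centred at $p_0$ is exactly $D^{N}_{p_0}=F(|z|^2)-F(0)$, and that the $z_j$ are Bochner coordinates for $(N,G)$ at $p_0$: indeed every monomial of $D^N_{p_0}$ has the form $\prod_j(z_j\bar z_j)^{m_j}$, so the diastasis contains no purely holomorphic or antiholomorphic terms and begins with $\sum_j|z_j|^2$, which is precisely the Bochner normalization.

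Next I would invoke the hereditary property of the diastasis under \K\ immersions. Writing $p=\phi^{-1}(p_0)$ and regarding $\phi$ as a holomorphic isometric immersion (if only a Riemannian isometry is assumed, a preliminary step is needed to see that $\phi$ is $\pm$-holomorphic), Calabi's theorem gives $D^{M}_{p}=\phi^{*}D^{N}_{p_0}=F(|z\circ\phi|^2)-F(0)$. Thus the local \K\ geometry of $(M,g)$ near $p$ is completely encoded by the restriction of the single torus-invariant function $F$ to the complex submanifold $\phi(M)\subset\complex^n$, and Bochner coordinates of $M$ at $p$ may be taken as restrictions of the $z_j$. I would then translate the \K-Einstein condition $\rho=\lambda\,\omega_g$ into the potential-theoretic identity $\log\det(g^{M}_{\mu\bar\nu})=-\lambda\,D^{M}_{p}+h$ on $\phi^{-1}(U)$, with $h$ pluriharmonic and $\lambda$ the Einstein constant; since $s=2k\lambda$ with $k=\dim_\complex M$, proving positive scalar curvature is equivalent to proving $\lambda>0$.

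The main obstacle is exactly this sign. A purely local computation cannot settle it, since on any small ball a \K-Einstein metric of either sign exists, so the positivity must come from compactness of $M$ together with the embedding. Integrating the Einstein equation gives $\mathrm{sign}(\lambda)=\mathrm{sign}\big(c_1(M)\cdot[\omega_g]^{k-1}\big)=\mathrm{sign}\big(\deg_{\omega_g}(-K_M)\big)$, so the heart of the matter is to show that $-K_M$ has positive degree with respect to the induced polarization. Here I would exploit the chart once more: $D^M_p$ is a real-analytic, bounded-below exhaustion of the dense open set $\phi^{-1}(U)$, tending to $+\infty$ along $\phi^{-1}(N\setminus U)$ because the toric potential blows up at the boundary divisor, whereas $\log\det(g^M_{\mu\bar\nu})$ stays bounded on the compact $M$; I expect this asymptotic mismatch, fed into the identity above, to rule out $\lambda\le 0$. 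Alternatively, one can try to show that the restriction of $F$ to $\phi(M)$ forces $(M,g)$ to be projectively induced and then quote the positivity theorem of Hulin for \K-Einstein submanifolds of $\complex\projective^N$. Either way, establishing $\lambda>0$—rather than the formal diastasis bookkeeping of the first two steps—is where the real work lies.
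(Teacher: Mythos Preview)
Your first two paragraphs are essentially the paper's setup: the affine chart at the fixed point, the rotation-invariant potential, the identification of the $z_j$ as global Bochner coordinates, and the hereditary property of the diastasis. The paper also proves that $h=0$ (not merely pluriharmonic) directly from the Bochner normalization, and it proves $D^N_{p_0}\ge 0$ from plurisubharmonicity plus rotation invariance; you assert the latter (``bounded-below exhaustion'') without argument, but this is a minor omission.

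The genuine gap is the last step. Neither of your two suggestions closes it. For (a), the asymptotic mismatch is not usable as stated: you have no control over how $\phi(M)$ meets the boundary divisor $N\setminus U$, so you cannot assert that $D^M_p\to+\infty$ along $\phi^{-1}(N\setminus U)$ (the restriction of a function blowing up on a divisor need not blow up on a submanifold that meets the divisor), nor do you know that $\phi^{-1}(U)$ is the complement of a divisor in $M$. For (b), the toric metric $G$ is not the pullback of a Fubini--Study metric in general, so $(M,g)$ is not known to be projectively induced and Hulin's theorem does not apply.

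What the paper actually does is a volume comparison. With $h=0$ one has, on $\hat M=\phi(M)\cap U$,
\[
\frac{\omega_g^m}{m!}=\Big(\tfrac{i}{2}\Big)^m e^{-\frac{\lambda}{2}D^N_{p_0}}\,dZ_1\wedge d\bar Z_1\wedge\cdots\wedge dZ_m\wedge d\bar Z_m,
\]
where $Z_1,\dots,Z_m$ are the first $m$ of the global Bochner coordinates. The right-hand side being a volume form forces the projection $\pi\colon \hat M\to\complex^m$, $(Z_1,\dots,Z_n)\mapsto(Z_1,\dots,Z_m)$, to be open; crucially, because the biholomorphism $U\simeq\complex^n$ is \emph{algebraic}, $\pi|_{\hat M}$ is an algebraic map, and Chevalley's theorem then gives that $\pi(\hat M)$ contains a Zariski-open subset of $\complex^m$, hence has infinite Euclidean volume. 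If $\lambda\le 0$ then $e^{-\frac{\lambda}{2}D^N_{p_0}}\ge 1$ (here the non-negativity of the diastasis is used), so $\mathrm{vol}(\hat M,g)\ge \mathrm{vol}_{\mathrm{eucl}}(\pi(\hat M))=\infty$, contradicting compactness of $M$. This algebraic-projection idea is the missing ingredient in your sketch.
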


\begin{teor}
Let $(M, \omega)$ a toric manifold endowed with an integral toric \K \ form and let $\Delta \subseteq \real^n$ be the image of the moment map for the torus action. Then, there exists a number $c(\Delta)$ (explicitely computable from the polytope, see Corollary \ref{corollariodivisore}) such that any ball of radius $r > c(\Delta)$, symplectically embedded into $(M, \omega)$, must intersect the divisor $M \setminus \complex^n$.
\end{teor}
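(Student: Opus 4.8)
The plan is to argue by contraposition: I would assume that $\phi\colon B^{2n}(r)\hookrightarrow (M,\omega)$ is a symplectic embedding of the standard Euclidean ball whose image is disjoint from the divisor $D:=M\setminus\complex^n$, and then bound $r$ by a quantity $c(\Delta)$ read off from the polytope. Since $\phi(B^{2n}(r))\subseteq\complex^n$, the whole discussion takes place inside the open dense chart $\complex^n\subseteq M$ on which, by the fact recalled in the Introduction, everything is explicit. The first step is to fix the vertex $v$ of $\Delta$ corresponding to this chart (equivalently the $T^n$-fixed point $p_v\in\complex^n$) and to normalise affine coordinates on $\real^n$ so that $v$ is the origin and the $n$ edges of $\Delta$ issuing from $v$ point along the coordinate axes; the facets of $\Delta$ not containing $v$ are then exactly the ones whose preimages make up $D$.

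Next I would exploit that $\omega$ is exact on the chart. Because $H^2(\complex^n;\real)=0$, one has $\omega|_{\complex^n}=d\alpha$, and the natural primitive is supplied by the toric structure: in action--angle (Guillemin) coordinates $(x,\theta)\in\Delta^{\circ}\times T^n$ one has $\omega=\sum_i dx_i\wedge d\theta_i$, where $x=\mu$ is the moment map, so that $\alpha=\sum_i (x_i-v_i)\,d\theta_i$ is a primitive which, with the above normalisation, extends smoothly across $p_v$. The point of this choice is that $\alpha$ is governed by the diastasis/Bochner potential of the metric and, crucially, that the moment coordinates $x_i$ are bounded on the chart by the extent of $\Delta$: avoiding $D$ forces $\mu(\phi(B^{2n}(r)))$ to stay in $\Delta$ away from the far facets, so that $0\le x_i< b_i$ with $b_i:=\max_{x\in\Delta}x_i$.

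The heart of the argument is to convert this boundedness of the $x_i$ into a bound on $r$, and I would do so by Gromov non-squeezing. Since on the chart the $i$-th moment coordinate is, up to the normalising constant of $\omega$, $x_i=\tfrac12\pi|z_i|^2+(\text{lower order})$ and is bounded by $b_i$, the image $\phi(B^{2n}(r))$ is trapped inside a symplectic cylinder $Z_i\cong B^2(R_i)\times\complex^{n-1}$ whose width $\pi R_i^2$ equals the available $i$-th symplectic size. Non-squeezing then gives $\pi r^2\le \pi R_i^2$ for every $i$, whence $\pi r^2\le\min_i\pi R_i^2$; optimising over the vertex $v$ and the coordinate directions yields the explicit constant $c(\Delta)$ of Corollary \ref{corollariodivisore}. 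Equivalently, and this is the cleaner route to a sharp constant, one may evaluate the action: if $\ell\subset\partial B^{2n}(r)$ is a closed Hopf characteristic of the standard contact form $\lambda_0$, then $\oint_\ell\lambda_0=\pi r^2$, and since $\phi^{*}\alpha-\lambda_0$ is exact on the simply connected ball one obtains the identity $\pi r^2=\oint_{\phi(\ell)}\alpha$ with $\phi(\ell)\subset\complex^n$; bounding this period by the polytope data gives the same conclusion.

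The step I expect to be the main obstacle is making the cylinder (respectively the period) bound both \emph{uniform} over all symplectic embeddings and \emph{sharp} enough to produce the explicit $c(\Delta)$. The subtlety is that ``avoiding $D$'' only says that $\mu(\phi(B^{2n}(r)))$ stays in the open polytope, not that it stays a definite distance from the far facets; turning this open condition into a genuine non-squeezing obstruction requires knowing that $(\complex^n,\omega|_{\complex^n})$ has \emph{finite} symplectic volume and is, up to completion, exhausted by the cylinders $Z_i$. This is precisely where the compactness of $M$ and the explicit model of $\omega$ on the chart (through its potential) must be used, rather than mere exactness. Pinning down which linear functional on $\Delta$ controls the optimal cylinder, and checking that the resulting number agrees with the combinatorial constant extracted from the facet equations, is the computation I would defer to Corollary \ref{corollariodivisore}.
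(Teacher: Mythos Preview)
Your strategy---bound the moment coordinates on the chart, embed into a standard cylinder, and invoke Gromov non-squeezing---is exactly the paper's. The paper first replaces $\omega$ on the chart by the explicit model $\omega_\Delta = i\partial\bar\partial\log\bigl(\sum_k |\xi|^{2J_k}\bigr)$ coming from the Kodaira embedding (Theorem~\ref{equivsymp} and Proposition~\ref{prop2prima}), and then produces the cylinder via an explicit global symplectic map $\Psi:(\complex^n,\omega_\Delta)\to(\complex^n,\omega_0)$, namely $\xi_j\mapsto\sqrt{\partial\tilde\Phi/\partial x_j}\,\xi_j$ (Lemma~\ref{theoremembedding}). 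A direct computation then shows $|\Psi(\xi)_j|^2 = x_j\,\partial\tilde\Phi/\partial x_j \le 2\max_k(J_k)_j$, which is precisely your $b_j=\max_{x\in\Delta}x_j$ (the maximum of a coordinate on an integral polytope is attained at a lattice point), and non-squeezing finishes it. So your constant matches the paper's $c(\Delta)=\sqrt{2\min_j\max_k(J_k)_j}$.

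The one place where the paper is more careful than your sketch is the passage from action--angle coordinates to a symplectic embedding defined on \emph{all} of $\complex^n$: your map $(x,\theta)\mapsto(\sqrt{2x_i}\,e^{i\theta_i})$ is only defined on the open orbit $(\complex^*)^n$, and a ball symplectically embedded in the chart may well meet the coordinate hyperplanes. The explicit formula of Lemma~\ref{theoremembedding} is designed exactly to extend smoothly across $\{\xi_j=0\}$ (this is why the hypothesis $\partial\tilde\Phi/\partial x_k>0$ is needed), and this is the content you would have to supply to close your argument. Your alternative route via periods of $\alpha$ along Hopf characteristics is not used in the paper and, as you note, bounding $\oint_{\phi(\ell)}\alpha$ uniformly in $\phi$ would require further work since $\phi(\ell)$ bears no a priori relation to the torus orbits.
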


These two results are proved and discussed respectively in Section 2 (Theorem \ref{theotoriche}) and Section 3 (Corollary \ref{corollariodivisore}).

The paper ends with an Appendix where, for the reader's convenience, we give an exposition (as self-contained as possible) of the classical facts about toric manifolds  we need in Sections 2 and 3.

\section{\K--Einstein submanifolds of Toric manifolds}

Let us briefly recall Calabi's work on \K \ immersions and diastasis function \cite{ca}.

Given a complex manifold $N$ endowed with a real analytic \K \ metric $G$, the ingenious idea of 
Calabi was the introduction,
in a neighborhood of a point 
$p\in N$,
of a very special
\K\ potential
$D_p$ for the metric
$G$, which 
he christened 
{\em diastasis}.
Recall that
a \K\ potential
is an
analytic function 
$\Phi$ 
defined in a neighborhoood
of a point $p$
such that 
$\Omega =\frac{i}{2}\partial \bar\partial\Phi$,
where $\Omega$
is the \K\ form
associated
to $G$.
In a complex coordinate system
$(Z)$ around $p$
$$G_{\alpha\beta}=
2G(\frac{\partial}{\partial Z_{\alpha}},
\frac{\partial}{\partial \bar Z_{\beta}})
=\frac{{\partial}^2\Phi}
{\partial Z_{\alpha}\partial\bar Z_{\beta}}.$$
 
A \K\ potential is not unique:
it is defined up to the sum with
the real part of a holomorphic function.
By duplicating the variables $Z$ and $\bar Z$
a potential $\Phi$ can be complex analytically
continued to a function 
$\tilde\Phi$ defined in a neighborhood
$U$ of the diagonal containing
$(p, \bar p)\in N\times\bar N$
(here $\bar N$ denotes the manifold
conjugated of $N$).
The {\em diastasis function} is the 
\K\ potential $D_p$
around $p$ defined by
$$D_p(q)=\tilde\Phi (q, \bar q)+
\tilde\Phi (p, \bar p)-\tilde\Phi (p, \bar q)-
\tilde\Phi (q, \bar p).$$
Among all the potentials the diastasis
is characterized by the fact that 
in every coordinates system 
$(Z)$ centered in $p$
$$D_p(Z, \bar Z)=\sum _{|j|, |k|\geq 0}
a_{jk}Z^j\bar Z^k,$$
with 
$a_{j 0}=a_{0 j}=0$
for all multi-indices
$j$.
The following proposition shows
the importance of
the diastasis
in the context of holomorphic
maps between 
\K\ manifolds.
\begin{prop}{\bf (Calabi)}\label{calabidiastasis}
Let 
$\varphi :(M, g)\rightarrow
(N, G)$
be a holomorphic and isometric
embedding between \K\ manifolds
and suppose that $G$ is real analytic.
Then $g$ is real analytic
and for every point 
$p\in M$
$$\varphi (D_p)=D_{\varphi (p)},$$
where $D_p$ (resp. $D_{\varphi (p)})$
is the diastasis of $g$ relative to
$p$ (resp. of $G$ relative to $\varphi (p))$. 
\end{prop}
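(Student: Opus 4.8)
The plan is to first secure the real analyticity of $g$, so that the diastasis $D_p$ of $g$ is actually defined, and then to show that Calabi's diastasis construction is natural with respect to $\varphi$. Throughout I work in local holomorphic coordinates $(w)$ on $M$ centered at $p$ and $(Z)$ on $N$ centered at $\varphi(p)$, so that $\varphi$ is given by holomorphic functions $Z_{\alpha}=\varphi_{\alpha}(w)$ with $\varphi_{\alpha}(0)=0$, i.e. $\varphi(p)=0$.

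First I would prove that $g$ is real analytic. Since $\varphi$ is a holomorphic isometry, $g=\varphi^{*}G$, which in coordinates reads
\[
g_{ij}=\sum_{\alpha,\beta}G_{\alpha\beta}\bigl(\varphi(w),\overline{\varphi(w)}\bigr)\,\frac{\partial\varphi_{\alpha}}{\partial w_{i}}\,\overline{\frac{\partial\varphi_{\beta}}{\partial w_{j}}}.
\]
The coefficients $G_{\alpha\beta}$ are real analytic by hypothesis, the $\partial\varphi_{\alpha}/\partial w_{i}$ are holomorphic and their conjugates antiholomorphic; as real analyticity is stable under composition, products and sums, the $g_{ij}$ are real analytic. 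Hence $g$ is real analytic and $D_{p}$ is well defined.

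Next I would pick a real analytic \K\ potential $\Phi$ for $G$ near $\varphi(p)$ and consider its pullback $\varphi^{*}\Phi=\Phi\circ\varphi$. Because $\varphi$ is holomorphic, pullback commutes with $\partial$ and $\bar\partial$, so
\[
\frac{i}{2}\partial\bar\partial(\varphi^{*}\Phi)=\varphi^{*}\!\left(\frac{i}{2}\partial\bar\partial\Phi\right)=\varphi^{*}\Omega,
\]
which is the \K\ form of $g$; thus $\varphi^{*}\Phi$ is a \K\ potential for $g$ at $p$. The crucial point is the behaviour of Calabi's analytic continuation: since $\varphi$ is holomorphic, duplicating the variables shows that the complexification of $\varphi^{*}\Phi$ is
\[
\widetilde{\varphi^{*}\Phi}(w,\bar v)=\tilde\Phi\bigl(\varphi(w),\overline{\varphi(v)}\bigr),
\]
the holomorphic dependence on $w$ and the antiholomorphic dependence on $\bar v$ separating cleanly precisely because the components of $\varphi$ are holomorphic. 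Writing $F(w,\bar v):=\tilde\Phi(\varphi(w),\overline{\varphi(v)})$, the pullback of the diastasis of $G$ at $\varphi(p)$ becomes
\[
\bigl(\varphi^{*}D_{\varphi(p)}\bigr)(q)=F(q,\bar q)+F(p,\bar p)-F(p,\bar q)-F(q,\bar p),
\]
which is exactly the diastasis construction applied to the potential $\varphi^{*}\Phi$ of $g$. Setting $\bar q=\bar p$ kills the pure holomorphic terms and setting $q=p$ kills the pure antiholomorphic terms, so $\varphi^{*}D_{\varphi(p)}$ satisfies the characterizing condition $a_{j0}=a_{0j}=0$ and must therefore coincide with $D_{p}$. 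This yields $\varphi^{*}D_{\varphi(p)}=D_{p}$, that is $\varphi(D_{p})=D_{\varphi(p)}$.

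The main obstacle is the compatibility of the analytic continuation with the pullback, namely the identity $\widetilde{\varphi^{*}\Phi}(w,\bar v)=\tilde\Phi(\varphi(w),\overline{\varphi(v)})$. This is where holomorphicity of $\varphi$ is indispensable: only then do the duplicated holomorphic and antiholomorphic variables remain uncoupled after substitution, so that $\varphi^{*}\Phi$ again admits a well-defined complexification near the diagonal and the diastasis machinery applies verbatim. Once this is settled, everything reduces to the formal manipulation above and to the fact that the diastasis is the unique potential with vanishing pure terms.
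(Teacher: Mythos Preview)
The paper does not actually prove this proposition; it is stated as a classical result of Calabi with a reference to \cite{ca}, and is then used as a black box in the proof of Proposition~\ref{teormain}. Your argument is a correct and standard reconstruction of Calabi's proof: real analyticity of $g$ follows from $g=\varphi^{*}G$ together with the holomorphicity of $\varphi$, and the key compatibility $\widetilde{\varphi^{*}\Phi}(w,\bar v)=\tilde\Phi(\varphi(w),\overline{\varphi(v)})$ (which indeed relies precisely on $\varphi$ being holomorphic) shows that $\varphi^{*}D_{\varphi(p)}$ is a potential for $g$ with no pure terms at $p$, hence equals $D_{p}$ by the uniqueness characterization of the diastasis.
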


In Proposition \ref{teormain} below, we are going to require that $N$ is a compactification of $\complex^n$, or more precisely that $N$ contains an analytic subvariety  $Y$ such that $X = N \setminus Y$ is biholomorphic to $\complex^n$; as far as the \K \ metric $G$ on $N$ is concerned, in addition to the requirement that $G$ is real analytic, we impose two other conditions. The first one is

\smallskip

{\em Condition $(A)$: there exists
a point $p_*\in X=N\setminus Y$ such that
the diastasis $D_{{p_*}}$ is globally
defined and non-negative on $X$.} 

\vskip 0.3cm

In order to describe the second condition
we need to introduce the concept of
Bochner's coordinates (cfr. \cite{boc}, \cite{ca}, \cite{hu},
\cite{hu1}).
Given a real analytic
\K\ metric $G$ on $N$
and a point $p\in N$,
one can always find local 
(complex) coordinates
in a neighborhood of 
$p$
such that
$$D_p(Z, \bar Z)=|Z|^2+
\sum _{|j|, |k|\geq 2}
b_{jk}Z^j\bar Z^k,$$
where 
$D_p$ is the diastasis
relative to $p$.
These coordinates,
uniquely defined up
to a unitary transformation,
are called 
{\em the Bochner's coordinates}
with respect to the point $p$.

One important feature of these
coordinates which we are going to use
in the proof of our main theorem
is the following:
\begin{teor}({\bf Calabi})\label{calabibochner}
Let 
$\varphi :(M, g)\rightarrow
(N, G)$
be a holomorphic and isometric
embedding between \K\ manifolds
and suppose that $G$ is real analytic.
If $(z_1,\dots ,z_m)$
is a system of Bochner's coordinates in
a neighborhood $U$
of $p\in M$ then there exists
a system of 
Bochner's coordinates
$(Z_1,\dots ,Z_n)$
with respect to $\varphi (p)$
such that
\begin{equation}\label{zandZ}
Z_1|_{\varphi(U)}=z_1,\dots ,Z_m|_{\varphi (U)}=z_m.
\end{equation}
\end{teor}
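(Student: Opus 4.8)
The plan is to reduce the construction of the ambient Bochner coordinates to the (trivial) Bochner normalization of the induced diastasis on $M$, exploiting the fact, guaranteed by Proposition \ref{calabidiastasis}, that $D_{\varphi(p)}\circ\varphi=D_p$; that is, the restriction of the ambient diastasis to $\varphi(M)$ is precisely the diastasis of the induced metric $g$. Everything below is a local statement around $p$ and $\varphi(p)$, so I may freely manipulate the convergent power series of the (real analytic) diastasis functions in a neighborhood. First I would choose \emph{adapted} holomorphic coordinates $(Z_1,\dots,Z_n)$ centered at $\varphi(p)$, splitting $Z=(Z',Z'')$ with $Z'=(Z_1,\dots,Z_m)$ and $Z''=(Z_{m+1},\dots,Z_n)$, such that $\varphi(M)=\{Z''=0\}$ near $\varphi(p)$ and $Z_i\circ\varphi=z_i$ for $i\le m$. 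This is possible because $\varphi$ is a holomorphic embedding: one extends the holomorphic functions $z_i$ from the submanifold $\varphi(M)$ to a neighborhood in $N$ and completes them with transverse coordinates. Restricting to $\{Z''=0\}$ and invoking Proposition \ref{calabidiastasis} gives $D_{\varphi(p)}(Z',0,\bar Z',0)=D_p(z,\bar z)$; since the $z_i$ are already Bochner on $M$, the $Z'\!\times\!\bar Z'$ block of the Hessian of $D_{\varphi(p)}$ at $\varphi(p)$ is the identity.

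The heart of the argument is to carry out the two-step Bochner normalization on $N$ \emph{within} this adapted structure, i.e.\ using only changes of coordinates that leave the first $m$ functions unchanged along $\varphi(M)$ --- concretely, linear maps of block form $\left(\begin{smallmatrix} I & P\\ 0 & Q\end{smallmatrix}\right)$ and holomorphic corrections lying in the ideal generated by $Z_{m+1},\dots,Z_n$. For the quadratic part I would first apply such a block-triangular linear change to make the full quadratic part of $D_{\varphi(p)}$ equal to $|Z|^2$; this is possible precisely because the $Z'\!\times\!\bar Z'$ block is already the identity, so that normalizing amounts to a Gram--Schmidt (Schur-complement) adjustment of the transverse directions, governed by the positivity of the Hessian and not touching $Z'|_{\{Z''=0\}}$. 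I would then perform the holomorphic Bochner step by setting $W_\alpha:=\partial_{\bar Z_\alpha}D_{\varphi(p)}(Z,\bar Z)\big|_{\bar Z=0}$ for $\alpha=1,\dots,n$; a direct chain-rule computation at $\bar Z=0$ (where the Jacobian of $W=Z+O(Z^2)$ is the identity) shows that in the $W$-coordinates $\partial_{\bar W_\alpha}D_{\varphi(p)}|_{\bar W=0}=W_\alpha$, which is exactly the characterization of Bochner coordinates, i.e.\ the form $|W|^2+\sum_{|j|,|k|\ge 2}b_{jk}W^j\bar W^k$.

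Finally I would check the compatibility \eqref{zandZ}. For $\alpha\le m$ the derivative $\partial_{\bar Z_\alpha}$ is taken in a $\bar Z'$-direction, so it commutes with setting $\bar Z''=0$; hence $W_\alpha|_{\{Z''=0\}}=\partial_{\bar Z_\alpha}D_p(Z',\bar Z')|_{\bar Z'=0}$, which is nothing but the holomorphic Bochner normalization applied to the induced diastasis $D_p$ in the coordinates $z$. Since $z$ is \emph{already} a Bochner system on $M$, this normalization is the identity, so $W_\alpha|_{\{Z''=0\}}=Z_\alpha|_{\{Z''=0\}}$; equivalently $W_\alpha-Z_\alpha$ lies in the ideal $(Z_{m+1},\dots,Z_n)$ and therefore vanishes on $\varphi(M)$. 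Thus $W_\alpha\circ\varphi=Z_\alpha\circ\varphi=z_\alpha$ for $\alpha\le m$, and renaming the $W_\alpha$ as $Z_\alpha$ yields the desired Bochner coordinates satisfying \eqref{zandZ}.

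I expect the main obstacle to be precisely the bookkeeping of the last two steps: ensuring that \emph{both} the linear quadratic normalization and the holomorphic Bochner correction can be chosen block-triangular, respectively inside the ideal $(Z'')$, so that the submanifold coordinates are never disturbed while the ambient diastasis is simultaneously brought to normal form. The conceptual point making this work --- and which I would isolate as a lemma --- is that the ambient Bochner procedure, once restricted to $\{Z''=0\}$, collapses onto the intrinsic Bochner procedure for $D_p$, which is trivial by hypothesis.
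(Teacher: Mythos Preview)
The paper does not supply a proof of this theorem: it is quoted as a result of Calabi (with references \cite{boc}, \cite{ca}, \cite{hu}, \cite{hu1}) and invoked as a black box in the proof of Proposition~\ref{teormain}. There is therefore no argument in the paper to compare yours against.

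That said, your proof is correct and follows the natural route. The two-step normalization --- a block-upper-triangular linear change to make the full quadratic part equal to $|Z|^2$, followed by the holomorphic substitution $W_\alpha=\partial_{\bar Z_\alpha}D_{\varphi(p)}\big|_{\bar Z=0}$ --- is the standard Bochner construction, and your verification that each step leaves the first $m$ coordinates unchanged along $\{Z''=0\}$ is sound. For the linear step, the existence of a block-triangular $T$ with $T^*T$ equal to the Hessian reduces exactly to the positivity of the Schur complement $C-B^*B$ of the top-left identity block, which holds because the metric is positive definite; this is the Gram--Schmidt argument you allude to. For the holomorphic step, your chain-rule computation showing $\partial_{\bar W_\alpha}D\big|_{\bar W=0}=W_\alpha$ is correct (the Jacobian of $Z\mapsto W$ at the origin is the identity), and the compatibility check --- that $\partial_{\bar Z_\alpha}$ for $\alpha\le m$ commutes with restriction to $\bar Z''=0$, so that $W_\alpha|_{\{Z''=0\}}$ collapses to the intrinsic Bochner normalization of $D_p$, which is trivial by hypothesis --- is exactly the right observation. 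One minor caveat worth recording: after the holomorphic step the submanifold $\varphi(M)$ need no longer be cut out by $\{W''=0\}$, but the theorem only asserts $W_\alpha|_{\varphi(U)}=z_\alpha$ for $\alpha\le m$, which is what you prove.
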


\smallskip

We can then state the following

\smallskip

\noindent
{\em Condition $(B)$: the Bochner's 
coordinates with respect to the point
$p_*\in X$, given by the previous condition $(A)$,
are globally defined on 
$X$.}

\vskip 0.3cm

\smallskip

Our first result is then the following:

\begin{prop}\label{teormain}
Let 
$N$ be 
a smooth
projective 
compactification 
of $X$ 
such that
$X$
is algebraically
biholomorphic to
${\complex}^n$
and let $G$
be a real analytic \K\
metric on $N$
such that the following two 
conditions are satisfied:
\begin{itemize}
\item [(A)]
there exists
a point $p_*\in X$ such that
the diastasis $D_{{p_*}}$ is globally
defined and non-negative on $X$;
\item [(B)]
the Bochner's 
coordinates with respect to 
$p_*$
are globally defined on 
$X$.
\end{itemize}
Then any
K--E submanifold
$(M, g) \xhookrightarrow{\phi} (N, G)$ such that $p_* \in \phi(M)$ has positive
scalar curvature.
\end{prop}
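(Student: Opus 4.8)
The plan is to push everything down to the submanifold $M$ by pulling back the diastasis, to extract from the Einstein condition a single pointwise identity for $g$ in Bochner's coordinates, and then to contradict the compactness of $M$. Write $p\in M$ for the point with $\phi(p)=p_*$. By Calabi's Proposition \ref{calabidiastasis} the metric $g$ is real analytic and $\phi^*D_{p_*}=D_p$, so Condition $(A)$ shows that the diastasis $D_p$ of $(M,g)$ is globally defined and non-negative on $V:=\phi^{-1}(X)$, with $D_p(p)=0$. By Theorem \ref{calabibochner} a system of Bochner's coordinates $(z_1,\dots,z_m)$ at $p$ is cut out by the first $m$ of a system of Bochner's coordinates $(Z_1,\dots,Z_n)$ at $p_*$; Condition $(B)$ guarantees that the latter are defined on all of $X$, whence the $z_i=Z_i\circ\phi$ are globally defined on $V$. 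Thus on $V$ we dispose of a global real analytic potential $D_p\ge 0$ together with a distinguished coordinate system in which $D_p=|z|^2+\sum_{|j|,|k|\ge 2}b_{jk}z^j\bar z^k$.

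The key step is the identity $\det(g_{\alpha\bar\beta})=e^{-\lambda D_p}$ in the Bochner coordinates $(z)$, where $\Ric(g)=\lambda g$. Indeed, the Einstein condition $-\partial_\alpha\partial_{\bar\beta}\log\det(g_{\gamma\bar\delta})=\lambda\,\partial_\alpha\partial_{\bar\beta}D_p$ says precisely that $\log\det(g_{\alpha\bar\beta})+\lambda D_p$ is pluriharmonic, i.e.\ the real part of a holomorphic function, so its Taylor expansion at $p$ contains only the pure monomials $z^j$, $\bar z^k$ and a constant. On the other hand, in Bochner coordinates $g_{\alpha\bar\beta}=\delta_{\alpha\beta}+(\text{terms of }z\text{- and }\bar z\text{-degree}\ge 1)$, so both $\log\det(g_{\alpha\bar\beta})$ and $\lambda D_p$ expand into \emph{mixed} monomials $z^j\bar z^k$ with $|j|,|k|\ge 1$ only. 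A series that is simultaneously purely pluriharmonic and purely mixed must vanish, so $\log\det(g_{\alpha\bar\beta})+\lambda D_p=0$ near $p$, and then on all of $V$ by analytic continuation.

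It now suffices to prove $\lambda>0$, since the scalar curvature of $g$ is a positive multiple of $\lambda$. Suppose, for a contradiction, that $\lambda\le 0$. Then $D_p\ge 0$ forces $\det(g_{\alpha\bar\beta})=e^{-\lambda D_p}\ge 1$ on $V$, and integrating the volume form $dV_g=\det(g_{\alpha\bar\beta})\,dV_0$ in the global Bochner coordinates gives
$$\mathrm{Vol}(M,g)=\int_V \det(g_{\alpha\bar\beta})\,dV_0\ \ge\ \int_V dV_0,$$
where $dV_0$ is the Euclidean volume read in $(z)$. But $V=\phi(M)\cap X$ is a positive--dimensional complex submanifold, closed in $X\simeq\complex^n$ because $\phi(M)$ is compact; a positive--dimensional closed complex submanifold of $\complex^n$ is necessarily non-compact and, by Lelong's monotonicity formula, has infinite Euclidean volume. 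Hence the right--hand side diverges, $\mathrm{Vol}(M,g)=+\infty$, contradicting the compactness of $M$. Therefore $\lambda>0$ and $(M,g)$ has positive scalar curvature.

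The main obstacle is this last step: converting the pointwise, coordinate--dependent inequality $\det(g_{\alpha\bar\beta})\ge 1$ into a genuine global contradiction. Two points demand care. First, one must know that the Bochner coordinates $(z)$ do more than provide globally defined functions — that they exhibit $V$ (equivalently, that $D_p$ exhausts $V$) in a way making the volume computation legitimate on all of $V$, not merely on the maximal domain on which $(z)$ is a chart. Second, one needs the quantitative input that a positive--dimensional closed complex submanifold of $\complex^n$ has infinite Euclidean volume, together with the repeatedly used fact that $\phi(M)$ is compact, so that $V$ is closed in $X$ and $\mathrm{Vol}(M,g)$ is finite. Reconciling the local analytic identity of the second paragraph with these global finiteness and infiniteness statements is where the real work lies.
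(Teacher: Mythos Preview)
Your argument tracks the paper's closely through the derivation of the pointwise identity $\det(g_{\alpha\bar\beta})=e^{-\lambda D_p}$ (the paper writes $e^{-\frac{\lambda}{2}D_p}$, a normalization difference only), and the contradiction strategy is the same. The gap is in the final step, and it is precisely the one you flag but do not resolve.

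The quantity $\int_V dV_0$, with $dV_0=\prod_{j=1}^m\frac{i}{2}\,dz_j\wedge d\bar z_j$ and $z_j=Z_j\circ\phi$, is \emph{not} the Euclidean $2m$-volume of $\hat M=\phi(M)\cap X$ as a submanifold of $\C^n$. Under $\phi$ it becomes $\int_{\hat M}\prod_{j=1}^m\frac{i}{2}\,dZ_j\wedge d\bar Z_j$, i.e.\ the Lebesgue measure of the projection $\pi(\hat M)\subset\C^m$ counted with multiplicity, where $\pi$ is projection onto the first $m$ Bochner coordinates. By Wirtinger this projected volume is \emph{at most} the intrinsic Euclidean volume of $\hat M$ in $\C^n$; Lelong's monotonicity controls the latter, not the former, so the inequality runs the wrong way for your purposes. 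And there is no general Liouville principle here: closed complex submanifolds of $\C^n$ can carry bounded non-constant holomorphic functions (any Stein manifold embeds properly in some $\C^N$), so nothing so far prevents $\pi(\hat M)$ from having finite measure.

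This is exactly where the hypothesis that $X$ is \emph{algebraically} biholomorphic to $\C^n$ enters---a hypothesis you never invoke. The paper observes that $\pi|_{\hat M}$ is open (because $dZ_1\wedge\cdots\wedge dZ_m$ is nowhere zero on $\hat M$, by the volume identity) and algebraic (since $\phi(M)$ is projective, hence algebraic in $N$, and the identification $X\cong\C^n$ is algebraic). Chevalley's theorem on images of morphisms then forces $\pi(\hat M)$ to contain a Zariski-open subset of $\C^m$, so $\mathrm{vol}_{eucl}(\pi(\hat M))=\infty$ and consequently $\int_V dV_0\ge\mathrm{vol}_{eucl}(\pi(\hat M))=\infty$. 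Your Lelong shortcut does not substitute for this step, and the algebraicity assumption is not decorative.
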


\begin{remar}\label{condA}
\rm The easiest example of compactification of
${\complex}^n$ which satisfies condition $(A)$
is given by 
${\complex}P^n={\complex}^n\cup
Y$ endowed with the Fubini--Study metric
$g_{FS}$, namely the metric whose associated \K\ form is given by
\begin{equation}
\omega_{FS}=\frac{i}{2}
\partial\bar{\partial}\log\displaystyle
\sum _{j=0}^{n}|Z_{j}|^{2},
\end{equation}
and
$Y={\complex}P^{n-1}$
is the hyperplane 
$Z_0=0$.
Indeed the diastasis
with respect to
$p_*=[1, 0,\dots ,0]$
is given by:
$$D_{p_*}(u, \bar{u})=
\log (1+\sum _{j=1}^{n}|u_j|^2).$$
where $(u_1,\dots ,u_n)$ are the affine coordinates, 
namely 
$u_j=\frac{Z_j}{Z_0}, j=1,\dots, n$.
Proposition \ref{teormain} can be then  considered as an extension of
a theorem of Hulin \cite{hu1} which asserts that a compact  \K--Einstein submanifold
of ${\complex}P^n$
is Fano (see also \cite{LZ09}).

Other  examples  of compactifications of
${\complex}^n$
satisfying conditions
$(A)$ and $(B)$ are given by the compact homogeneous Hodge
manifolds. 
These are not interesting since all compact homogeneous 
Hodge manifolds can be \K\ embedded into  a complex projective space (\cite{DLH})
and so we are reduced to study the Hulin's problem.
We also remark that, by Proposition \ref{calabidiastasis},
condition $(A)$ is satisfied also by all
the \K\ submanifolds of the previous examples.
\end{remar}

\begin{proof}[Proof of Proposition \ref{teormain}]
Let 
$p$
be a point
in $M$ 
such that 
$\varphi (p)=p_*$,
where $p_*$ is the point in 
$N$ given by condition $(A)$.
Take Bochner's 
coordinates 
$(z_1,\dots ,z_m)$
in a neighborhood 
$U$ of 
$p$ which we 
take small enough to
be contractible.
Since the \K\ metric
$g$ is Einstein with
(constant) scalar
curvature $s$ then:
$\rho_{\omega}=\lambda \omega$
where 
$\lambda$ is the Einstein
constant, i.e. 
$\lambda=\frac{s}{2m}$,
and $\rho_{\omega}$
is the \emph{Ricci form}.
If $\omega =\frac{i}{2}
\sum _{j=1}^{m}g_{j\bar{k}}
dz_{j}\wedge d\bar{z}_{\bar{k}}$
then 
$\label{rholocal}
\rho _{\omega}=-i\partial
\bar{\partial}\log \det g_{j\bar{k}}$
is the local expression of
its \emph{Ricci form}.

Thus the volume form
of $(M, g)$ reads
on 
$U$ as:
\begin{equation}\label{voleucl}
\frac{\omega^m}{m!}=\frac{i^m}{2^m}
e^{-\frac{\lambda}{2}D_p+F+\bar F}
dz_1\wedge d\bar z_1\wedge\dots\wedge 
dz_m\wedge d\bar z_m\, \, ,
\end{equation}
where $F$ is a holomorphic
function on $U$
and
$D_p=\varphi^{-1}(D_{p_*})$
is the diastasis on $p$
(cfr. Proposition
\ref{calabidiastasis}).
 
We claim that
$F+\bar F=0$.
Indeed, 
observe that 
$$\frac{\omega ^m}{m!}=\frac{i^m}{2^m}
\det (\frac{\partial ^2 D_p}
{\partial z_{\alpha} \partial \bar z_{\beta}})
dz_1\wedge d\bar z_1\wedge\dots\wedge 
dz_m\wedge d\bar z_m .$$
By the very definition
of Bochner's coordinates it is easy
to check that 
the expansion of 
$\log\det (\frac{\partial ^2 D_p}
{\partial z_{\alpha} \partial \bar z_{\beta}})$
in the 
$(z,\bar z)$-coordinates
contains only mixed terms
(i.e. of the form
$z^j\bar z^k, j\neq 0, k\neq 0$).
On the other hand
by formula  (\ref{voleucl})
$$-\frac{\lambda}{2} D_p + F + \bar F=
\log\det (\frac{\partial ^2 D_p}
{\partial z_{\alpha} \partial \bar z_{\beta}}).$$

Again by the definition of the Bochner's 
coordinates this forces
$F + \bar F$ to be zero,
proving our claim.
By Theorem \ref{calabibochner}
there exist Bochner's
coordinates 
$(Z_1, \dots , Z_n)$
in a neighborhood of 
$p_*$
satisfying 
(\ref{zandZ}).
Moreover, by condition $(B)$
this coordinates 
are 
globally defined
on
$X$.
Hence, by 
formula (\ref{voleucl})
(with $F+\bar F=0$),
the
$m$-forms 
$\frac{\Omega^m}{m!}$
and
$e^{-\frac{\lambda}{2}D_{p_*}}
dZ_1\wedge d\bar Z_1\wedge\dots\wedge 
dZ_m\wedge d\bar Z_m$ 
globally defined on $X$
agree on the open set
$\varphi (U)$.
Since they are
real analytic  they
must agree
on the
connected open set
$\hat M=\varphi (M)\cap X$, i.e.
\begin{equation}\label{eqforms}
\frac{\Omega^m}{m!}=\frac{i^m}{2^m}
e^{-\frac{\lambda}{2}
D_{p_*}}
dZ_1\wedge d\bar Z_1\wedge\dots\wedge 
dZ_m\wedge d\bar Z_m.
\end{equation}

Since 
$\frac{\Omega^m}{m!}$
is a volume form on $\hat M$
we deduce that
the restriction of the
projection map
$$\pi :X\cong{\complex}^n
\rightarrow {\complex}^m:
(Z_1,\dots ,Z_n)\mapsto 
(Z_1,\dots ,Z_m)$$
to $\hat M$ 
is open.
Since it
is also algebraic 
(because the biholomorphism
between $X$ and  ${\complex}^n$
is algebraic),
its image 
contains a Zariski open
subset of ${\complex}^m$
(see Theorem
$13.2$ in \cite{bo}),
hence 
its euclidean volume,
$\vol_{eucl}(\pi (\hat M))$,  
has to be infinite.
Suppose now
that the scalar curvature
of $g$ is non-positive.
By formula 
(\ref{eqforms})
and  by the fact that $D_{p_*}$ is non-negative,
we get 
$\vol (\hat M, g)\geq
\vol_{eucl}(\pi (\hat M))$
which is the desired contradiction,
being the volume of $M$
(and hence that of 
$\hat M$)
finite.
\end{proof}

\vskip 0.3cm

Now, we are going to apply Proposition \ref{teormain} to {\it toric manifolds} endowed with {\it toric K\"ahler metrics}.

Recall that a toric manifold $M$ is a complex manifold which contains an open dense subset biholomorphic to $(\complex^*)^n$ and such that the canonical action of $(\complex^*)^n$ on itself by $(\alpha_1, \dots, \alpha_n)(\beta_1, \dots, \beta_n) = (\alpha_1 \beta_1, \dots, \alpha_n \beta_n)$ extends to a holomorphic action on the whole $M$ (see the Appendix for more details).
A toric K\"ahler metric $\omega$ on $M$ is a K\"ahler metric which is invariant for the action of the real torus $T^n = \{ (e^{i \theta_1}, \dots , e^{i \theta_n}) \ | \ \theta_i \in \real \}$ contained in the dense, complex torus $({\complex}^*)^n$ , that is for every fixed $\theta \in T^n$ the diffeomorphism $f_{\theta}: M \rightarrow M$ given by the action of $(e^{i \theta_1}, \dots , e^{i \theta_n})$ is an isometry.

We have the following, well-known fact (compare, for example, with Section 2.2.1 in \cite{donaldson} or Proposition 2.18 in \cite{batyrev}).

\begin{prop}\label{apertointoriche}
If $M$ is a projective, compact toric manifold then there exists an open dense subset $X \subseteq M$ which is algebraically biholomorphic to $\complex^n$. More precisely, for every point $p \in M$ fixed by the torus action there are an open dense neighbourhood $X_p$ of $p$ and a biholomorphism $\phi_p: X_p \rightarrow \complex^n$ such that $p$ is sent to the origin and the restriction of the torus action to $X_p$ corresponds via $\phi$ to the canonical action of $(\complex^*)^n$ on $\complex^n$. 
\end{prop}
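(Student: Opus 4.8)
The plan is to reduce everything to the combinatorial (fan) description of $M$ recalled in the Appendix. A smooth compact projective toric manifold of complex dimension $n$ is the toric variety $X_\Sigma$ attached to a complete, smooth fan $\Sigma$ in $\Lambda_\real := \Lambda \otimes \real \cong \real^n$, where $\Lambda \cong \mathbb{Z}^n$ is the cocharacter lattice of $T=(\complex^*)^n$ and $\Lambda^\vee$ its dual (character) lattice. The variety is glued from the affine charts $U_\sigma = \operatorname{Spec}\complex[\sigma^\vee \cap \Lambda^\vee]$ as $\sigma$ ranges over the cones of $\Sigma$, completeness of $\Sigma$ being the combinatorial shadow of compactness of $M$, and the cone $\{0\}$ yielding the open dense orbit $U_{\{0\}} = T$.

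First I would recall the orbit--cone dictionary: the $T$-orbits are in inclusion-reversing bijection with the cones of $\Sigma$, a cone of dimension $d$ corresponding to an orbit of dimension $n-d$. In particular the $T$-fixed points correspond exactly to the $n$-dimensional (maximal) cones, and each such $\sigma$ contains a unique fixed point, namely its distinguished point $x_\sigma \in U_\sigma$. Thus fixing a $T$-fixed point $p$ is the same as fixing a maximal cone $\sigma$ with $p = x_\sigma$. (In the moment-polytope language used later in the paper this is the statement that the fixed points are the vertices of $\Delta$.)

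The heart of the argument is the identification $U_\sigma \cong \complex^n$, and this is where smoothness enters. By definition each cone of $\Sigma$ is generated by part of a $\mathbb{Z}$-basis of $\Lambda$; being $n$-dimensional, $\sigma$ is generated by a full basis $e_1,\dots,e_n$ of $\Lambda$. Then $\sigma^\vee$ is generated by the dual basis $e_1^*,\dots,e_n^*$, the monoid $\sigma^\vee \cap \Lambda^\vee$ is free on these $n$ elements, and writing $\chi_i$ for the character attached to $e_i^*$ one obtains $\complex[\sigma^\vee \cap \Lambda^\vee] = \complex[\chi_1,\dots,\chi_n]$, a genuine polynomial ring. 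Hence $\phi_p := (\chi_1,\dots,\chi_n)\colon U_\sigma \to \complex^n$ is an isomorphism of algebraic varieties. Under $\phi_p$ the distinguished point $x_\sigma$ (at which every nonconstant $\chi_i$ vanishes, since $\sigma^\perp = 0$) maps to the origin, and because each $\chi_i$ is a character of $T$ the $T$-action on $U_\sigma$ is carried to the coordinatewise action of $(\complex^*)^n$ on $\complex^n$, i.e.\ the canonical one. Finally $X_p := U_\sigma$ is open by construction and dense because it contains the open dense torus $T = U_{\{0\}} \subseteq U_\sigma$; taking any one such chart gives the asserted open dense $X$ algebraically biholomorphic to $\complex^n$.

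The main obstacle is precisely the monoid computation in the previous paragraph: one must verify that the smoothness hypothesis, which is a priori only a statement about the generating rays of $\sigma$ forming part of a lattice basis, forces the saturated monoid $\sigma^\vee \cap \Lambda^\vee$ to be \emph{free} abelian, so that its monoid algebra is a polynomial ring with no relations rather than merely a finitely generated $\complex$-algebra. Once this is established everything else is formal: completeness follows from compactness, the identification of the fixed point with the origin is immediate from $\sigma^\perp = 0$, and density follows from containing the open orbit. I would therefore concentrate the written proof on the passage from smoothness of the cone to freeness of the dual monoid.
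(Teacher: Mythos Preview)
Your argument is correct. You take the classical affine-chart description of a toric variety, $U_\sigma=\operatorname{Spec}\complex[\sigma^\vee\cap\Lambda^\vee]$, and observe that smoothness of a maximal cone $\sigma$ forces the dual monoid to be free on $n$ generators, so that $U_\sigma\cong\complex^n$; the orbit--cone correspondence then identifies fixed points with maximal cones and the distinguished point $x_\sigma$ with the origin. This is the standard route in, say, Fulton's book, and there is no gap. (Your ``main obstacle'' is in fact immediate: once $\sigma$ is generated by a lattice basis $e_1,\dots,e_n$, one has $\sigma^\vee\cap\Lambda^\vee=\bigoplus_i\intero_{\ge 0}\,e_i^*$ directly.)

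The paper, however, proves the same statement via the Cox quotient construction $X_\Sigma=\complex^d_\Sigma/K_\Sigma$: for a maximal cone $\sigma$ generated by $u_{j_1},\dots,u_{j_n}$ it sets $X_\sigma=\{[z]\in X_\Sigma:z_{j_{n+1}},\dots,z_{j_d}\neq 0\}$ and writes down an explicit biholomorphism $\phi_\sigma\colon X_\sigma\to\complex^n$ (and its inverse) in terms of the matrix $U^{-1}$ sending the chosen generators to the standard basis, then checks by hand that the $K_\Sigma$-ambiguity cancels and that the residual $(\complex^*)^n$-action becomes coordinatewise multiplication. Your proof is shorter and more conceptual; the paper's is more computational but yields explicit formulas for $\phi_\sigma$ and $\psi_\sigma$ in homogeneous coordinates, and these exact formulas are reused verbatim in the Appendix to compute the Kodaira embedding $i_g|_{X_p}$ and the pulled-back potential that feed into Section~3. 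So while your approach establishes the proposition cleanly, the paper's coordinate-heavy version is not gratuitous: it is setting up the explicit expressions needed downstream.
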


A self-contained proof of this proposition in given in Section \ref{supapp1} of the Appendix. Now we are ready to prove the main result of this section.

\begin{teor}\label{theotoriche}
Let $N$ be a projective toric manifold equipped with a toric \K \ metric $G$. Then any K-E submanifold $(M, g) \xhookrightarrow{\phi} (N, G)$ such that $\phi(M)$ contains a point of $N$ fixed by the torus action has positive scalar curvature.
\end{teor}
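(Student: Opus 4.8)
The plan is to deduce the statement from Proposition \ref{teormain}, so the whole task reduces to producing, around the given torus--fixed point, a big cell $X\cong\complex^n$ on which the ambient diastasis satisfies conditions $(A)$ and $(B)$. First I would let $p_*\in N$ be the torus--fixed point lying in $\phi(M)$ and apply Proposition \ref{apertointoriche} to obtain an open dense $X=X_{p_*}$ together with a biholomorphism $\phi_{p_*}\colon X_{p_*}\to\complex^n$ sending $p_*$ to the origin and intertwining the torus action with the standard diagonal action $Z_k\mapsto e^{i\theta_k}Z_k$ of $T^n$ on $\complex^n$. Since $G$ is a (real analytic) toric \K\ metric, in these coordinates its restriction $\Omega$ to $X$ is $T^n$--invariant, and it remains only to check conditions $(A)$ and $(B)$ for the diastasis $D_{p_*}$ of $G$ at $p_*$.

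The key structural observation is that $D_{p_*}$ is itself $T^n$--invariant: it is canonically attached to the invariant metric $G$ and to the fixed point $p_*$, so it is preserved by the isometries $f_\theta$, which fix $p_*$. Hence $D_{p_*}$ is a function of $|Z_1|^2,\dots,|Z_n|^2$ alone; equivalently, in its Calabi expansion $\sum a_{jk}Z^j\bar Z^k$ one has $a_{jk}=0$ unless $j=k$. Two consequences follow. First, condition $(B)$: invariance rules out the mixed quadratic monomials $Z_\alpha\bar Z_\beta$ with $\alpha\neq\beta$, so the quadratic part of $D_{p_*}$ is diagonal, $\sum_i a_{e_i e_i}|Z_i|^2$ with $a_{e_i e_i}>0$; the diagonal linear change $W_i=a_{e_ie_i}^{1/2}Z_i$ then brings $D_{p_*}$ into Bochner form $|W|^2+\sum_{|j|,|k|\ge 2}b_{jk}W^j\bar W^k$, and being linear it preserves the global coordinate system on $X\cong\complex^n$, so the Bochner coordinates at $p_*$ are globally defined. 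Second, the ``globally defined'' half of condition $(A)$: since $X\cong\complex^n$ is contractible, $\Omega$ admits a global \K\ potential, which after averaging over $T^n$ may be taken $T^n$--invariant; as the only $T^n$--invariant pluriharmonic functions on $\complex^n$ are the constants, this invariant potential differs from $D_{p_*}$ by a constant, and therefore $D_{p_*}$ extends to a global potential on all of $X$.

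The hard part will be the non--negativity in condition $(A)$, which I would prove by a moment--map/convexity argument along rays. Writing $s_i=\log|Z_i|^2$ and $\phi(s)=D_{p_*}$, the standard toric identity gives $\mu_i=\partial\phi/\partial s_i$ for the components of the moment map of $\Omega$, and a direct computation along a real ray $Z(t)=t\zeta$ yields $\tfrac{d}{dt}D_{p_*}(t\zeta)=\tfrac{2}{t}\sum_i\mu_i(Z(t))$ for $t>0$. The diastasis normalization (the absence of pure terms, giving $\mu_i\to 0$ as $Z\to p_*$) forces the vertex of the Delzant polytope $\Delta$ corresponding to $p_*$ to sit at the origin of $\real^n$; and because the coordinates of Proposition \ref{apertointoriche} are adapted to $p_*$, the smoothness (Delzant) condition makes the tangent cone of $\Delta$ at that vertex the positive orthant. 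Since a convex polytope is contained in its tangent cone at every vertex, $\Delta\subseteq\{x_i\ge 0\ \forall i\}$, whence $\sum_i\mu_i\ge 0$ throughout $X$. Thus $D_{p_*}(t\zeta)$ is non--decreasing in $t$, so $D_{p_*}\ge D_{p_*}(p_*)=0$ on every ray and hence on all of $X$, establishing $(A)$.

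With $(A)$ and $(B)$ in hand and $p_*\in\phi(M)$ by hypothesis, Proposition \ref{teormain} applies verbatim and gives that $(M,g)$ has positive scalar curvature. I expect the only genuinely delicate points to be the normalization step identifying the diastasis with the potential whose moment polytope has the relevant vertex at the origin, and the containment of $\Delta$ in the positive orthant; the reduction, the invariance of the diastasis, and the Bochner rescaling are formal.
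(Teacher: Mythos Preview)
Your reduction to Proposition \ref{teormain} and your treatment of the $T^n$--invariance of the diastasis (hence condition $(B)$, up to the diagonal rescaling you make explicit and the paper glosses over) match the paper's argument. The genuine divergence is in the non--negativity half of condition $(A)$. The paper proceeds by an elementary plurisubharmonicity argument: restrict $D_{p_*}$ to an arbitrary complex line $\xi\mapsto a\xi$ through the origin to obtain a rotation--invariant subharmonic function $f_a$ of one complex variable, and observe that $\partial_\xi\partial_{\bar\xi}f_a=(t f_a')'\ge 0$ (with $t=|\xi|^2$) forces $tf_a'(t)\ge 0$ and hence $f_a\ge f_a(0)=0$. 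Your route instead passes through the moment map and the Delzant polytope: you identify $\tfrac{d}{dt}D_{p_*}(t\zeta)=\tfrac{2}{t}\sum_i\mu_i$, show that the diastasis normalization pins the vertex $\mu(p_*)$ at the origin, and use the Delzant/tangent--cone containment $\Delta\subseteq\{x_i\ge 0\}$ to conclude $\sum_i\mu_i\ge 0$. Both arguments are correct. The paper's is more self--contained, needing only that $D_{p_*}$ is strictly plurisubharmonic and rotation--invariant; yours is more geometric and explains \emph{why} the inequality holds in terms of the polytope picture already present in the paper, at the cost of invoking Atiyah--Guillemin--Sternberg convexity and the identification of the facets through $\mu(p_*)$ with the coordinate hyperplanes (which you should justify explicitly: $\mu_i=|Z_i|^2\partial_{t_i}\tilde D$ vanishes on $\{Z_i=0\}$, so those facets lie in $\{\mu_i=0\}$, and a sign check near the origin places $\Delta$ on the positive side).
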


\begin{proof}
As we have just recalled, $N$ is a smooth projective compactification of an open subset algebraically biholomorphic to $\complex^n$. So, the Theorem will follow from Proposition \ref{teormain} once we have shown that, for $p_*$ equal to a point $N$ fixed by the torus action, then the conditions (A) and (B) of the statement of Proposition \ref{teormain} are satisfied. 

Let then $p_* \in N$ be such a point, and let $\xi = (\xi_1, \dots, \xi_n)$ be the system of coordinates given by the biholomorphism $\phi_{p_*}: X_{p_*} \rightarrow \complex^n$ given in Proposition \ref{apertointoriche} above. 

Let $\Omega$ be the \K \ form associated to $G$ and let $\Phi$ be a local potential for $\Omega$ around the origin  in the coordinates $\xi = (\xi_1, \dots, \xi_n)$.  Since $X = X_{p_*}$ is contractible, $\Phi$ can be extended to all $X$ (see, for example, Remark 2.6.2 in \cite{GMS}) and
$$D(\xi, \bar \xi) = \Phi(\xi, \bar \xi) + \Phi(0,0) - \Phi(0, \bar \xi) - \Phi(\xi, 0)$$
is a diastasis function on all $X$ in the coordinates $\xi_1, \dots, \xi_n$.

For every $\theta \in T^n$ and $\xi \in \complex^n$,  let us denote 
$$e^{i \theta} \xi := (e^{i \theta_1}, \dots , e^{i \theta_n})(\xi_1, \dots, \xi_n) =  (e^{i \theta_1} \xi_1, \dots , e^{i \theta_n} \xi_n)$$ and $D_{\theta}(\xi, \bar \xi) := D(e^{i \theta} \xi, e^{-i \theta} \bar \xi).$
Then
$$i \partial \bar \partial D_{\theta} =  i \frac{\partial^2 D_{\theta}}{\partial \xi_k \partial \bar \xi_l}(\xi, \bar \xi) d\xi_k \wedge d \bar \xi_l = i e^{i(\theta_k - \theta_l)}\frac{\partial^2 D}{\partial \xi_k \partial \bar \xi_l}(e^{i \theta} \xi, e^{-i \theta} \bar \xi) d\xi_k \wedge d \bar \xi_l =$$

$$ = i \frac{\partial^2 D}{\partial \xi_k \partial \bar \xi_l}(e^{i \theta} \xi, \overline{e^{i \theta}\xi}) d(e^{i\theta_k} \xi_k) \wedge d \overline{(e^{i\theta_l} \xi_l)} = (e^{i \theta})^*((\xi)^*(\Omega|_X))=$$
$$ = (\xi)^*(f_{\theta}^*(\Omega|_X)) = (\xi)^*(\Omega|_X),$$

where the last equality follows by the invariance of $\Omega$ for the action of $T^n$. Then, for every $\theta \in T^n$, the function $D_{\theta}$ is a potential for $\Omega$ on $X$; moreover, it clearly satisfies the characterization for the diastasis. By the uniqueness of the diastasis around the origin, we then have $D = D_{\theta}$, that is 
$$D(\xi, \bar \xi) = D(e^{i \theta_1} \xi_1, \dots, e^{i \theta_n} \xi_n, e^{-i \theta_1} \bar \xi_1, \dots, e^{-i \theta_n} \bar \xi_n) .$$ 
This last equality means that $D$ depends on the norms $|\xi_1|^2, \dots, |\xi_n|^2$ (i.e. $D$ is a {\it rotation invariant} function), and in particular it is immediately seen to satisfy the condition for $\xi_1, \dots, \xi_n$ to be Bochner coordinates.

In order to show that $D$ is non-negative, recall that, since $i \partial \bar \partial D$ is a K\"ahler form, $D$ must be a plurisubharmonic function, which means that, for any $a = (a_1, \dots, a_n), b = (b_1, \dots, b_n) \in \C^n$, the function of one complex variable $f(\xi) = D(a\xi+b) = D(a_1\xi+b_1, \dots, a_n \xi + b_n)$ is a subharmonic function, i.e. $\frac{\partial^2 f}{\partial \xi \partial \bar \xi} \geq 0$. To prove the claim it will be enough to show that, for any $a \in \C^n$, the rotation invariant subharmonic function $f_a(\xi) = D(a\xi)$ is non-negative. Now, we have

$$0 \leq \frac{\partial^2 f_a}{\partial \xi \partial \bar \xi} = t \cdot \frac{d^2 f_a}{d t^2}  + \frac{d f_a}{d t} = \frac{d}{d t}(t f_a(t))$$
where we are denoting $t = |\xi|^2$.

It follows that $g(t) = t f_a(t)$ is a non-decreasing function, and since $g(0)=0$ we have $g(t) = t f_a(t) \geq 0$, that is $f_a(t) \geq 0$, as required.

\end{proof}

\begin{remar}
\rm If $\phi(M)$ does not contain any point of $N$ fixed by the torus action, then for any $f \in Aut(N) \cap Isom(N,G)$ one could be tempted to replace $\phi$ by $f \circ \phi$ (which is clearly again a \K\ embedding) so to have that $f(\phi(M))$ contains a fixed point.

Anyway, while the automorphisms group of a toric manifold can be explicitly described, in general we do not have control on $Isom(N, G)$, and in general this group can be too small. For example, for the \K-Einstein metric $G$ on $N = \complex \projective^2 \sharp 3 \overline{\complex \projective^2}$ (\cite{Siu}, \cite{Tian}), one has that $Isom(N,G)$ is the real part of $Aut(N)$, whose component of the identity $Aut^{\circ}(N)$ contains only the automorphisms given by the action of the complex torus $(\complex^*)^n$ (indeed, one easily sees that the set of the {\it Demazure roots} is empty in this case, see for example Section 3.4 in \cite{Oda})), so $Isom(N, G) \simeq T^n$ and the isometries do not move the fixed points. By contrast, if $N$ is the complex projective space endowed with the Fubini-Study metric $G$, then $Isom(N, G)$ acts transitively and we can always guarantee the validity of the assumption of Theorem \ref{theotoriche}, so that we recover Hulin's theorem (Remark \ref{condA}).

Notice that if $f \in Aut(N) \setminus Isom(N, G)$ then, in order to guarantee that $f \circ \phi$ is a \K\ embedding one has to replace $G$ by $(f^{-1})^*(G)$, and consequently the torus action, say $\rho$, by $\tilde \rho = f \circ \rho \circ f^{-1}$. Then any new fixed point is of the form $f(p)$, where $p$ is a point fixed by the action $\rho$. This implies that if $\phi(M)$ does not contain any point fixed by $\rho$, then $f(\phi(M))$ does not contain any point fixed by $\tilde \rho$.

\end{remar}

\section{Gromov width of toric varieties}

Let us recall that the Gromov width (introduced in \cite{GROMOV85}) of a $2n$-dimensional symplectic manifold $(M, \omega)$ is defined as 
$$c_G(M, \omega) = \sup \{ \pi r^2 \ | \ (B^{2n}(r), \omega_{can}) \ \text{symplectically embeds into} \ (M, \omega) \}$$
where $\omega_{can} = \frac{i}{2} \sum_{j=1}^n d z_j \wedge d \bar z_j$ is the canonical symplectic form in $\complex^n$.

By Darboux's theorem $c_G(M, \omega)$  is a positive number. Computations
and estimates of the Gromov width for various examples can be found
in \cite{BIRAN99}, \cite{castro}, \cite{GWgrass} and in particular for toric manifolds in \cite{Lu}.

\medskip

In what follows, we are going to make some remarks about the Gromov width of toric manifolds.
More precisely, let $(M, \omega)$ be a toric manifold endowed with an integral toric \K\ form $\omega$. As it is known (\cite{delzant}, \cite{Gu}), the image of the moment map $\mu: M \rightarrow \real^n$ for the isometric action of the real torus $T^n$ on $M$ is a convex {\it Delzant} polytope $\Delta = \{ x \in \real^n \ | \ \langle x, u_i \rangle \geq \lambda_i, \ i=1, \dots, d \} \subseteq \real^n$, i.e. such that the normal vectors $u_i$ to the faces meeting in a given vertex form a $\intero$-basis of $\intero^n$. The vertices of $\Delta$ (which, by the integrality of $\omega$, belong to $\intero^n$) are the images by $\mu$  of the fixed points for the action of $T^n$ on $M$.

\smallskip

As recalled in Section \ref{appendix2} of the Appendix, such a polytope $\Delta$ represents a very ample line bundle on the toric manifold $X_{\Sigma}$ associated to the fan $\Sigma$ which has the $u_i$'s as generators. Then, by the Kodaira embedding $i_{\Delta}$ we can embed $X_{\Sigma}$ into a complex projective space $\complex \projective^{N-1}$ and endow $X_{\Sigma}$ with the pull-back $i_{\Delta}^*( \omega_{FS})$ of the Fubini-Study form $\omega_{FS}= i \log (\sum_{j=1}^N |z_j|^2)$ of $\complex \projective^{N-1}$.  

We have the following, important result.

\begin{teor}\label{equivsymp}  (see, for example, \cite{abreu}, page 3 or \cite{Gu}, Section A2.1) The manifolds $(X_{\Sigma}, i_{\Delta}^*( \omega_{FS}))$ and $(M, \omega)$ are equivariantly symplectomorphic.
\end{teor}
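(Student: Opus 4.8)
The plan is to deduce the statement from the uniqueness part of Delzant's classification theorem \cite{delzant}: a compact connected symplectic toric manifold is determined, up to equivariant symplectomorphism, by its moment polytope (more precisely, two such manifolds whose moment polytopes coincide, even only up to translation, are equivariantly symplectomorphic). Thus it suffices to exhibit both $(M,\omega)$ and $(X_{\Sigma}, i_{\Delta}^*(\omega_{FS}))$ as compact symplectic toric manifolds and to show that their moment polytopes coincide. The Delzant smoothness condition is automatic here: $M$ is a smooth manifold by hypothesis, and, as recalled in the Appendix, $X_{\Sigma}$ is smooth precisely because $\Delta$ is a Delzant polytope.

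For $(M,\omega)$ there is nothing to do, since by hypothesis the moment map $\mu$ for the $T^n$-action has image exactly $\Delta$. The bulk of the work is therefore the computation of the moment map of the $T^n$-action on $X_{\Sigma}$ with respect to $i_{\Delta}^*(\omega_{FS})$. First I would observe that $X_{\Sigma}$ and $M$ are equivariantly biholomorphic: the fan $\Sigma$ is the normal fan of $\Delta$, which is the fan of the projective toric manifold $M$, so the two complex toric manifolds coincide and only the symplectic forms remain to be compared.

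To compute the moment map on $X_{\Sigma}$, write $\Delta \cap \intero^n = \{m_1, \dots, m_N\}$ for the lattice points of $\Delta$; these index a basis of the global sections of the very ample bundle $L_{\Delta}$ and hence the homogeneous coordinates of $\complex\projective^{N-1}$, so that on the dense torus the Kodaira embedding reads $i_{\Delta}(t) = [\chi^{m_1}(t) : \cdots : \chi^{m_N}(t)]$ and the induced $T^n$-action is $t\cdot[z_1:\cdots:z_N] = [\chi^{m_1}(t)z_1 : \cdots : \chi^{m_N}(t)z_N]$. The moment map for the standard torus action on $(\complex\projective^{N-1},\omega_{FS})$ is the normalized vector $[z] \mapsto (|z_1|^2,\dots,|z_N|^2)/\sum_k|z_k|^2$; composing with the Lie-algebra map dual to $e_k \mapsto m_k$ gives the moment map of the $T^n$-action as
$$\mu_{\Sigma}([z]) = \frac{\sum_{k=1}^N |z_k|^2\, m_k}{\sum_{k=1}^N |z_k|^2}.$$
On $i_{\Delta}(X_{\Sigma})$ this is a convex combination of the $m_k$, so its image lies in the convex hull of $\Delta\cap\intero^n$; moreover the torus-fixed points are sent by $i_{\Delta}$ to the coordinate points of $\complex\projective^{N-1}$ corresponding to the vertices of $\Delta$, and by the Atiyah--Guillemin--Sternberg convexity theorem the image of $\mu_{\Sigma}$ is the convex hull of these fixed-point images, i.e. of the vertices of $\Delta$. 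Since $\Delta$ is a lattice polytope, this convex hull is exactly $\Delta$, so the moment polytope of $(X_{\Sigma}, i_{\Delta}^*(\omega_{FS}))$ is $\Delta$.

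With both moment polytopes equal to $\Delta$, Delzant's uniqueness theorem yields the desired equivariant symplectomorphism. The main obstacle I anticipate is this moment-map computation together with the bookkeeping of normalizations: the Fubini--Study moment map is defined only up to an additive constant, so one must fix conventions (and possibly a translation) ensuring the two polytopes agree exactly rather than merely up to translation, and one must verify that $i_{\Delta}$ is genuinely equivariant for the intended $T^n$-action, so that $\mu_{\Sigma}$ really is its moment map. Once the normalization is pinned down and equivariance is checked, the remainder is formal.
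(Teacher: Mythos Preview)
The paper does not actually prove this theorem: it is stated with the parenthetical citation ``(see, for example, \cite{abreu}, page 3 or \cite{Gu}, Section A2.1)'' and no argument is given. Your approach via Delzant's uniqueness theorem is correct and is precisely the standard argument one finds in those references --- compute the moment map for the $T^n$-action on $(X_\Sigma,i_\Delta^*(\omega_{FS}))$ by restricting the linear moment map on $\complex\projective^{N-1}$, identify its image as $\Delta$, and invoke Delzant. Your anticipated obstacle about normalizations is real but harmless: Delzant's theorem is insensitive to translations of the polytope, so matching $\Delta$ up to translation suffices, and the equivariance of $i_\Delta$ follows directly from the monomial description of the embedding given in Proposition~\ref{prop2prima}. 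The aside that $X_\Sigma$ and $M$ are equivariantly biholomorphic is true but unnecessary for the argument, since Delzant's classification is purely symplectic.
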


Now, by the following well-known result we can write the Kodaira embedding explicitly.

\begin{prop}\label{prop2prima}
Let $p \in M$ be a fixed point for the torus action and $X_p$, $\phi_p: X_p \rightarrow \complex^n$ be as in Proposition \ref{apertointoriche}.  The restriction to $X_p$ of the Kodaira embedding $i_{\Delta}: M \rightarrow \complex \projective^{N-1}$ writes, in the coordinates given by $\phi_p$,  as

$$i_g|_{X_{p}} \circ\phi_p^{-1}: \complex^n \rightarrow \complex \projective^{N-1},  \ \ \xi \mapsto [\dots, \xi_1^{x_1} \cdots \xi_n^{x_n}, \dots]$$
where $(x_1, \dots, x_n)$ runs over all the points with integral coordinates in the polytope $\Delta'$ of $\real^n$ obtained by $\Delta$ via the transformation in $SL_n(\intero)$ and the translation which send the vertex of $\Delta$ corresponding to $p$ to the origin and the corresponding edge to the edge generated by the vectors $e_1, \dots, e_n$ of the canonical basis of $\real^n$.
\end{prop}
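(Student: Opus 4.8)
The plan is to reduce the statement to the standard description of the Kodaira map of a polarized toric variety on the dense torus, and then to transport that description to the affine chart $X_p$ by an explicit change of coordinates. First I would recall (from Section \ref{appendix2} of the Appendix) that the global sections of the very ample line bundle associated with $\Delta$ are naturally indexed by the lattice points $m \in \Delta \cap \intero^n$, and that on the dense torus $(\complex^*)^n \subseteq M$ the Kodaira map $i_{\Delta}$ is given in the torus coordinate $t$ by the characters (Laurent monomials)
$$t \longmapsto [\, \dots, \chi^m(t), \dots \,]_{m \in \Delta \cap \intero^n} = [\, \dots, t^m, \dots \,].$$
The whole proof then amounts to rewriting this monomial map in the coordinates $\xi = \phi_p(\cdot)$ coming from Proposition \ref{apertointoriche}.

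Next I would identify those coordinates intrinsically. Let $v \in \intero^n$ be the vertex of $\Delta$ corresponding to the fixed point $p$, and let $w_1, \dots, w_n$ be the primitive integral vectors spanning the $n$ edges of $\Delta$ issuing from $v$. The Delzant condition at $v$ — that the inner normals $u_i$ of the facets through $v$ form a $\intero$-basis of the dual lattice — is equivalent, by duality, to $w_1, \dots, w_n$ being a $\intero$-basis of $\intero^n$, with $\langle w_j, u_i \rangle = \delta_{ij}$. The chart $X_p$ is precisely the torus-invariant affine chart attached to the maximal cone dual to $v$, whose coordinate ring is the polynomial ring generated by $\chi^{w_1}, \dots, \chi^{w_n}$; checking that the induced torus action is the standard one on $\complex^n$ shows that, under the identification $\phi_p$ of Proposition \ref{apertointoriche}, one has exactly $\xi_j = \chi^{w_j}$. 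The combined transformation of the statement — the element of $SL_n(\intero)$ expressing $\intero^n$ in the basis $\{w_j\}$ together with the translation by $-v$ — is then nothing but the coordinate change $m \mapsto x = (x_1, \dots, x_n)$ determined by $m = v + \sum_j x_j w_j$, and by construction it carries $\Delta$ to the polytope $\Delta'$ with vertex at the origin and edges along $e_1, \dots, e_n$.

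With this in hand the computation is immediate: writing $m = v + \sum_j x_j w_j$ one gets
$$\chi^m(t) = \chi^v(t) \prod_{j=1}^n \big( \chi^{w_j}(t) \big)^{x_j} = \chi^v(t) \prod_{j=1}^n \xi_j^{x_j},$$
so that, the common nonzero factor $\chi^v(t)$ being irrelevant in projective space, the Kodaira map reads $\xi \mapsto [\, \dots, \xi_1^{x_1} \cdots \xi_n^{x_n}, \dots \,]$ with $x = (x_1, \dots, x_n)$ ranging over $\Delta' \cap \intero^n$, as claimed. Finally I would argue that this formula, a priori valid only on the dense torus, extends to all of $X_p \cong \complex^n$: since the tangent cone of $\Delta'$ at the origin is the positive orthant, convexity forces $\Delta' \subseteq \real^n_{\geq 0}$, hence every exponent $x_j$ is nonnegative and each $\xi^x$ is a genuine polynomial, while $0 \in \Delta'$ furnishes the constant monomial $1$, so the map is everywhere well defined. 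Both the monomial map and the restriction of $i_{\Delta}$ are morphisms of algebraic varieties agreeing on the dense open torus, hence they coincide on $X_p$.

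I expect the only genuinely delicate point to be the intrinsic identification $\xi_j = \chi^{w_j}$, that is, matching the abstract fan-theoretic chart at the cone dual to $v$ with the concrete biholomorphism $\phi_p$ of Proposition \ref{apertointoriche} and verifying that the two torus actions agree. Once the coordinates are correctly pinned down, the translation drops out projectively and the remaining steps are routine bookkeeping with characters.
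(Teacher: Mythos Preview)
Your argument is correct and takes a more intrinsic route than the paper. You work with the classical character description: sections of the polarizing bundle are indexed by $\Delta\cap\intero^n$, the Kodaira map on the open torus is $t\mapsto[\chi^m(t)]_m$, and the passage to the chart $X_p$ is the lattice change of basis $m=v+\sum_j x_j w_j$ to the edge vectors $w_j$ at the vertex $v$, after which the common factor $\chi^v$ drops out projectively and convexity forces the exponents to be nonnegative. The paper, in keeping with the self-contained spirit of its Appendix, instead stays entirely inside the explicit quotient presentation $X_\Sigma=\complex^d_\Sigma/K_\Sigma$: it writes down a concrete trivialization of ${X_\Sigma}_g$ over $X_p$, determines the monomial basis of global sections via the equivariance condition under $K_\Sigma$, and then literally substitutes the inverse chart $\psi_\sigma(\xi)=[(\xi_1,\dots,\xi_n,1,\dots,1)]$ already exhibited in the proof of Proposition~\ref{apertointoriche}, finally checking by hand that the resulting exponent conditions cut out the translated and $SL_n(\intero)$-transformed polytope. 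Your approach is shorter and makes the geometric meaning of the affine transformation transparent; the paper's approach has the virtue that the point you correctly flag as delicate---matching the abstract fan-theoretic affine chart with the specific $\phi_p$ constructed earlier---never arises as a separate verification, because everything is computed in the same set of homogeneous coordinates from the outset.
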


Notice that the existence of the transformation in $SL_n(\intero)$ invoked in the statement  follows from the fact that the normal vectors to the faces meeting in any vertex of the polytope form a $\intero$-basis of $\intero^n$.

We will give a detailed proof of Proposition \ref{prop2prima} in the Appendix (Proposition \ref{prop2})

\smallskip

It follows by Proposition \ref{prop2prima} that the restriction $\omega_{\Delta}$ of the pull-back metric $i_{\Delta}^*( \omega_{FS})$ to the open subset $X_p$ is given in the coordinates $\xi_1, \dots, \xi_n$ by  $i \log (\sum_{j=1}^N |\xi|^{2 J_j}))$, where $\{ J_k \}_{k=1, \dots, N} = \Delta' \cap \intero^n$ and for any $J = (J_1, \dots, J_n) \in \intero^n$ we are denoting $|\xi|^{2J} := |\xi_1|^{2J_1} \cdots |\xi_n|^{2J_n}$.

Then, by combining Theorem \ref{equivsymp} and Proposition \ref{prop2prima}, we conclude that the manifold $(M, \omega)$ has an open dense subset, say $A$, symplectomorphic to $(\complex^n, \omega_{\Delta} := i \log (\sum_{j=1}^N |\xi|^{2 J_j}))$.

We now estimate from above the Gromov width of $(\complex^n, \omega_{\Delta})$. We are going to use the following
\begin{lemma}\label{theoremembedding}
Let $A$ be an open, connected subset of $\complex^n$ such that $A \cap \{ z_j = 0\} \neq \emptyset$, $j =1, \dots,n$, endowed with a K\"ahler form $\omega = \frac{i}{2} \partial \bar \partial \Phi$, where $\Phi(\xi_1, \dots, \xi_n) = \tilde \Phi(|\xi_1|^2, \dots, |\xi_n|^2)$ for some smooth function $\tilde \Phi: \tilde A \rightarrow \real$, $\tilde A = \{(x_1, \dots, x_n) \in \real^n \ | \ x_i = |\xi_i|^2, \ (\xi_1, \dots, \xi_n) \in A \ \}$ (we say that $\omega$ is a rotation invariant form). 
Assume $\frac{\partial \tilde \Phi}{\partial x_k} > 0$ for every $k = 1, \dots, n$. Then the map
\begin{equation}\label{embeddingtheorem}
\Psi: (A, \omega) \rightarrow (\complex^n, \omega_{0}), \ \ (\xi_1, \dots, \xi_n) \mapsto \left( \sqrt{\frac{\partial \tilde \Phi}{\partial x_1}} \xi_1, \dots, \sqrt{\frac{\partial \tilde \Phi}{\partial x_n}} \xi_n \right)  
\end{equation}
is a symplectic embedding (where $\omega_0 = \frac{i}{2} \partial \bar \partial \sum_{k=1}^n |z_k|^2$).
\end{lemma}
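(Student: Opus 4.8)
The plan is to use the rotation invariance throughout and to verify the two defining properties of a symplectic embedding separately: the identity $\Psi^*\omega_0=\omega$, and the fact that $\Psi$ is an injective immersion. I would work on the open dense subset of $A$ where all coordinates are nonzero, introduce there the symplectic polar (action--angle) coordinates $x_k=|\xi_k|^2$, $\psi_k=\arg\xi_k$, carry out the computation, and only at the end extend the resulting identity of $2$-forms to all of $A$ by continuity (both forms are smooth on $A$ and agree on a dense open set). Writing $h_k:=\partial\tilde\Phi/\partial x_k>0$, the crucial structural remark is that $\sqrt{h_k}$ depends only on the moduli $x_1,\dots,x_n$; hence $\Psi$ is equivariant for the rotation action of $T^n$, and if $w_k$ denote the coordinates on the target with $\phi_k=\arg w_k$, then $\phi_k\circ\Psi=\psi_k$ while $|w_k\circ\Psi|^2=h_k\,x_k=x_k\,\partial\tilde\Phi/\partial x_k$.

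For the form identity I would first record that in polar coordinates the target form is $\omega_0=\tfrac12\sum_k d|w_k|^2\wedge d\phi_k$. Pulling back along $\Psi$ and using the two equivariance identities above gives $\Psi^*\omega_0=\tfrac12\sum_k d\bigl(x_k\,\partial\tilde\Phi/\partial x_k\bigr)\wedge d\psi_k$. On the other hand, expanding $\omega=\tfrac{i}{2}\partial\bar\partial\Phi$ with $\Phi=\tilde\Phi(x_1,\dots,x_n)$ yields the metric coefficients $g_{j\bar k}=\delta_{jk}\,h_j+(\partial^2\tilde\Phi/\partial x_j\partial x_k)\,\bar\xi_j\xi_k$; rewriting this in the same polar coordinates and collecting terms, the diagonal part contributes $\tfrac12\sum_k h_k\,dx_k\wedge d\psi_k$ and the Hessian part contributes $\tfrac12\sum_{j,k}(\partial^2\tilde\Phi/\partial x_j\partial x_k)\,x_k\,dx_j\wedge d\psi_k$. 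Since the Leibniz expansion of $d\bigl(x_k\,\partial\tilde\Phi/\partial x_k\bigr)$ is exactly the sum of these two contributions, I obtain $\omega=\tfrac12\sum_k d\bigl(x_k\,\partial\tilde\Phi/\partial x_k\bigr)\wedge d\psi_k=\Psi^*\omega_0$. This computation, though elementary, is the bulk of the work.

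It remains to see that $\Psi$ is an embedding. Being an immersion is automatic, since $\Psi^*\omega_0=\omega$ is nondegenerate and hence $d\Psi$ has everywhere trivial kernel. For injectivity, equivariance reduces the question to injectivity of the map $G\colon\tilde A\to\real^n$, $G_k(x)=x_k\,\partial\tilde\Phi/\partial x_k=|w_k\circ\Psi|^2$: once $G(x)=G(x')$ and $\arg\xi_k=\arg\xi'_k$ are known one recovers $\xi=\xi'$, while vanishing coordinates are preserved because $h_k>0$. The key observation is that in the logarithmic variables $u_k=\log x_k$ one has $G_k=\partial/\partial u_k\bigl(\tilde\Phi(e^{u_1},\dots,e^{u_n})\bigr)$, so $G$ is the gradient of $F(u):=\tilde\Phi(e^{u_1},\dots,e^{u_n})$, and a direct computation shows that the Hessian of $F$ equals $D\,(g_{j\bar k})\,D$ with $D=\mathrm{diag}(\sqrt{x_1},\dots,\sqrt{x_n})$ (the matrix $(g_{j\bar k})$ being evaluated along the real locus $\xi_k=\sqrt{x_k}$). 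This is positive definite precisely because $\omega$ is a K\"ahler, hence positive, form; thus $F$ is strictly convex and $G=\nabla F$ is injective.

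The step I expect to be the genuine obstacle is exactly this last one: the passage from strict convexity of $F$ to injectivity of $\nabla F$ is clean when the domain of $F$ is convex, whereas for an arbitrary connected $A$ the set $\{\log x:x\in\tilde A\}$ need not be. In the setting where the lemma is applied this causes no trouble, because $\tilde\Phi$ is (the restriction of) a potential whose logarithmic expression is strictly convex on all of $\real^n$ — the standard toric picture, in which $G/2$ is the moment map and its image is the interior of the convex Delzant polytope — so the gradient is globally injective. I would therefore make the strict convexity of $F$ explicit and invoke it to conclude that $\Psi$ is a genuine symplectic embedding.
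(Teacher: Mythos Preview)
The paper does not actually prove this lemma: immediately after the statement it simply refers the reader to Theorem~1.1 of \cite{LZ}. So there is no in--paper argument to compare against, and your write--up would in effect replace a bare citation by a self--contained proof.

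On the substance, your argument is sound. The action--angle computation showing $\Psi^*\omega_0=\omega$ is correct (the key identity $\omega=\tfrac12\sum_k d\bigl(x_k\,\partial\tilde\Phi/\partial x_k\bigr)\wedge d\psi_k$ is exactly what one gets after exploiting the symmetry of the Hessian of $\tilde\Phi$), and extending from the dense torus to all of $A$ by continuity is legitimate since both sides are smooth $2$--forms. Immersivity follows, as you say, from nondegeneracy of $\omega$; since source and target have equal dimension, an injective immersion is automatically an embedding.

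Your treatment of injectivity is also the natural one: writing $F(u)=\tilde\Phi(e^{u_1},\dots,e^{u_n})$ one has $\nabla F=G$ and $\mathrm{Hess}\,F=D\,(g_{j\bar k})\,D$ positive definite, so $F$ is strictly convex. You are right to flag that strict convexity yields injectivity of $\nabla F$ only on a \emph{convex} domain, so that for a completely general connected $A$ the lemma, as stated, is delicate. In the paper the lemma is only ever applied with $A=\complex^n$ and $\tilde\Phi$ a toric potential, where $F$ is strictly convex on all of $\real^n$ and $G$ is (twice) the moment map onto the open polytope; in that situation your argument goes through without qualification. If you want the statement in full generality you would need to import whatever additional hypothesis \cite{LZ} uses, but for the purposes of this paper your caveat is exactly the right one.
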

For a proof of this lemma, see Theorem 1.1 in \cite{LZ}.
Our result is

\begin{teor}\label{GWCn}
Let $\omega_{\Delta} = i \partial \bar \partial \log (\sum_{j=1}^N |\xi|^{2 J_j})$. Then
\begin{equation}\label{stimaGWCn}
c_G(\complex^n, \omega_{\Delta}) \leq 2 \pi \min_{j=1, \dots, n} \left( \max_{k}\{ (J_k)_j \} \right)
\end{equation}
\end{teor}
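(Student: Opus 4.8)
The plan is to combine the explicit symplectic chart of Lemma \ref{theoremembedding} with Gromov's non-squeezing theorem \cite{GROMOV85}. First I would rewrite the form as $\omega_{\Delta} = \frac{i}{2}\partial\bar\partial\Phi$ with $\Phi = 2\log\sum_{k=1}^N |\xi|^{2J_k}$, which is manifestly rotation invariant: setting $x_a = |\xi_a|^2$ and $\tilde S(x) = \sum_k x^{J_k}$ (where $x^{J_k} = \prod_a x_a^{(J_k)_a}$) we have $\Phi = \tilde\Phi(x_1,\dots,x_n)$ with $\tilde\Phi = 2\log\tilde S$. By the description following Proposition \ref{prop2prima}, the shifted polytope $\Delta'$ has the origin as a vertex and contains, along each edge at that vertex, the primitive lattice vector $e_a$; hence both $0$ and $e_1,\dots,e_n$ lie in $\Delta'\cap\intero^n = \{J_k\}$. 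I would use the presence of the monomials $x_a$ in $\tilde S$ to verify the hypothesis $\frac{\partial\tilde\Phi}{\partial x_a} = \frac{2}{\tilde S}\frac{\partial\tilde S}{\partial x_a} > 0$ of Lemma \ref{theoremembedding}, since $x_a\frac{\partial\tilde S}{\partial x_a} = \sum_k (J_k)_a\, x^{J_k} \geq x_a > 0$ and $\tilde S > 0$.

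With the hypotheses in place, Lemma \ref{theoremembedding} supplies a symplectic embedding $\Psi\colon(\complex^n,\omega_{\Delta})\to(\complex^n,\omega_0)$ with $\Psi(\xi)_a = \sqrt{\partial\tilde\Phi/\partial x_a}\,\xi_a$ (recall $\omega_0 = \omega_{can}$). The decisive computation is the size of its image: writing $M_a := \max_k (J_k)_a$,
$$|\Psi(\xi)_a|^2 = x_a\frac{\partial\tilde\Phi}{\partial x_a} = \frac{2\sum_k (J_k)_a\, x^{J_k}}{\sum_k x^{J_k}} \leq 2 M_a ,$$
because the right-hand side is $2$ times a convex combination of the numbers $(J_k)_a$ (the weights $x^{J_k}/\tilde S$ are non-negative and sum to $1$). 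Hence $\Psi$ maps all of $\complex^n$ into the open cylinder $C_a = \{z\in\complex^n : |z_a|^2 < 2M_a\}$, simultaneously for each $a = 1,\dots,n$.

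Finally I would invoke non-squeezing coordinatewise. Given any symplectic embedding of a ball $(B^{2n}(r),\omega_{can})\hookrightarrow(\complex^n,\omega_{\Delta})$, post-composing with $\Psi$ produces a symplectic embedding $B^{2n}(r)\hookrightarrow C_a\subseteq(\complex^n,\omega_0)$. Since $C_a$ is precisely the standard symplectic cylinder of radius $\sqrt{2M_a}$ over the $a$-th coordinate plane, Gromov's non-squeezing theorem forces $r^2 \leq 2M_a$, i.e. $\pi r^2 \leq 2\pi M_a$, for every $a$. Taking the minimum over $a$ and the supremum over all symplectically embedded balls yields
$$c_G(\complex^n,\omega_{\Delta}) \leq 2\pi\min_{a}M_a = 2\pi\min_{j=1,\dots,n}\Big(\max_k\{(J_k)_j\}\Big),$$
which is exactly (\ref{stimaGWCn}).

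The main obstacle I anticipate is not conceptual but a matter of control at the boundary of the chart: one must ensure that $\Psi$ is globally defined and smooth across the coordinate hyperplanes $\{\xi_a = 0\}$ and that the bound $|\Psi(\xi)_a|^2 < 2M_a$ is uniform on all of $\complex^n$, not merely on the interior of the positive orthant. Both follow from the fact that $\partial\tilde\Phi/\partial x_a$ extends to a strictly positive smooth function up to $\{x_a = 0\}$, thanks to the presence of the monomial $x_a$ (the lattice point $e_a$) in $\tilde S$. Once the image is confirmed to lie in $\bigcap_a C_a$, the non-squeezing step is immediate and the estimate follows.
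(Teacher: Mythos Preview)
Your proof is correct and follows essentially the same route as the paper: apply Lemma~\ref{theoremembedding} to obtain the symplectic chart $\Psi$ into $(\complex^n,\omega_0)$, bound $|\Psi(\xi)_a|^2 = 2\sum_k (J_k)_a x^{J_k}/\sum_k x^{J_k} \leq 2M_a$ so that the image lies in each coordinate cylinder, and then invoke Gromov non-squeezing. Your convex-combination phrasing of the key inequality is exactly the paper's estimate; the paper goes slightly further and shows that the supremum of $|\Psi(\xi)_a|^2$ actually equals $2M_a$ (so the cylinder cannot be shrunk), but this sharpness is not needed for the upper bound~(\ref{stimaGWCn}), and your treatment of the boundary $\{x_a=0\}$ via the presence of the lattice point $e_a$ is in fact more careful than the paper's.
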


\begin{proof}
Let us apply Lemma \ref{theoremembedding} to $A = \complex^n$ endowed with the rotation invariant K\"ahler form $i \partial \bar \partial \log (\sum_{j=1}^N |\xi|^{2 J_j}))$. In the notation of the statement of the lemma, we have then $\tilde \Phi = 2 \log \sum_{k=0}^N x^{J_k}$, where $x = (x_1, \dots, x_n) \in \real^n$ and we are denoting $x^J = x_1^{j_1} \cdots x_n^{j_n}$, for $J = (j_1, \dots, j_n)$. Since for every $k = 0, \dots, N$ we have $J_k = ({(J_k)}_1, \dots, {(J_k)}_n) \in (\intero_{\geq 0})^n$, we have
$$\frac{\partial \tilde \Phi}{\partial x_j} = \frac{\frac{2}{x_j} \sum_{k=0}^N {(J_k)}_j x^{J_k} }{\sum_{k=0}^N x^{J_k}} > 0$$
and then we can embed symplectically $\complex^n$ (endowed with the toric form) into $\complex^n$ (endowed with the standard symplectic form) by $(\xi_1, \dots, \xi_n) \mapsto \left( \sqrt{\frac{\partial \tilde \Phi}{\partial x_1}} \xi_1, \dots, \sqrt{\frac{\partial \tilde \Phi}{\partial x_n}} \xi_n \right)$ so that $(\complex^n, i \partial \bar \partial \log (\sum_{j=1}^N |\xi|^{2 J_j}))$ is symplectomorphic to the domain $D = \Psi(\complex^n) \subseteq \complex^n$ endowed with the canonical symplectic form $\omega_{can}$. 
Now, let $\pi_k: \complex^n \rightarrow \complex$, $\pi_k(z_1, \dots, z_n) = z_k$ denote the projection onto the $k$-th coordinate. Then $D$ is clearly contained in the cylinder $\pi_k(D) \times \C^{n-1} = \{ (z_1, \dots, z_n) \in \complex^n \ | \ \textrm{there exists} \ p \in D \ \textrm{with} \ p_k = z_k \}$ over $\pi_k(D)$, and then in the cylinder 
$$C_R  = \{ (z_1, \dots, z_n) \in \complex^n \ | \ |z_k|^2 < R^2 \},$$ 
where $R$ is the radius of any ball in the $z_k$-plane containing $\pi_k(D)$. By the celebrated Gromov's non-squeezing theorem, which states that the Gromov width of $C_R$ endowed with the canonical symplectic form $\omega_{can}$ is $\pi R^2$, we conclude that the Gromov width of $D$ is less or equal to $\pi R^2$, where $R$ is the radius of any euclidean ball of the $z_k$-plane containing $\pi_k(D)$.

In order to calculate the best value of $R$, notice that
$$\pi_k(D) = \{  \sqrt{\frac{\partial \tilde \Phi}{\partial x_j}} \xi_j \ | \ (\xi_1, \dots, \xi_n) \in \complex^n \} = $$ 
$$= \{  \sqrt{\frac{\partial \tilde \Phi}{\partial x_j} x_j} \ e^{i \theta_j } \ | \ (x_1, \dots, x_n) \in (\real_{\geq 0})^n, \theta_j \in [0, 2 \pi] \}$$
(since $x_j = |\xi_j|^2$ and $\xi_j = \sqrt{x_j} e^{i \theta_j }$) that is the circle in $\real^2$ of radius 
$$\sup \{ \sqrt{\frac{\partial \tilde \Phi}{\partial x_j} x_j} \ | \ (x_1, \dots, x_n) \in (\real_{\geq 0})^n \}.$$
Now, 
\begin{equation}\label{raggio}
\sqrt{\frac{\partial \tilde \Phi}{\partial x_j} x_j}=  \sqrt{ \frac{ 2 \sum_{k=0}^N (J_k)_j x^{J_k} }{\sum_{k=0}^N x^{J_k}}}
\end{equation}
where we are denoting $J_k = ({(J_k)}_1, \dots,  {(J_k)}_n)$. 
Now, fix $j = 1, \dots, n$. On the one hand, we clearly have
$$\sum_{k=0}^N (J_k)_j x^{J_k} \leq \sum_{k=0}^N \max_{k}\{ (J_k)_j \} x^{J_k} = \max_{k}\{ (J_k)_j \} \sum_{k=0}^N  x^{J_k}$$
so that 
$$\sup \sqrt{ \frac{ 2\sum_{k=0}^N (J_k)_j x^{J_k} }{\sum_{k=0}^N x^{J_k}}} \leq \sqrt{2 \max_{k}\{ (J_k)_j \}}.$$
On the other hand, we can show that the sup is actually equal to $\sqrt{2 \max_{k}\{ (J_k)_j \}}$ by setting $x_i = t$ for $i \neq j$ and $x_j = t^s$, for an integer $s$ large enough, and letting $t \rightarrow +\infty$. Indeed, after substituting $x_i = t$ for $i \neq j$ and $x_j = t^s$ we get the one variable function
$$\sqrt{ \frac{ 2 \sum_{k=0}^N (J_k)_j t^{(J_k)_j s + \sum_{i \neq j} (J_k)_i} }{\sum_{k=0}^N t^{(J_k)_j s + \sum_{i \neq j} (J_k)_i}}}$$
and, if we set $f_k(s) = (J_k)_j s + \sum_{i \neq j} (J_k)_i$, it is clear that there is a value of $s$ for which the largest $f_k(s)$ is obtained for the value of $k$ for which $(J_k)_j$ (i.e. the slope of the affine function $f_k(s)$) is maximum.
This concludes the proof.
\end{proof}

As an immediate corollary of Theorem \ref{GWCn}, we get the following

\begin{corol}\label{corollariodivisore}
Let $(M, \omega)$ be a toric manifold endowed with an integral toric form, let $\Delta \subseteq \real^n$ be the image of the moment map for the torus action (which, up to a translation and a transformation in $SL_n(\intero)$, can be assumed to have the origin as vertex and the edge at the origin generated by the canonical basis of $\real^n$) and let $\{ J_k \}_{k=0, \dots, N} = \Delta \cap \intero^n$. Let $p$ be the point fixed by the torus action corresponding to the origin of $\Delta$ and $X_p \simeq \complex^n$ be the open subset given by Proposition \ref{apertointoriche}. 
Then, any ball of radius $r > \sqrt{2 \min_{j=1, \dots, n} \left( \max_{k}\{ (J_k)_j \} \right)}$,symplectically embedded into $(M, \omega)$, must intersect the divisor $M \setminus X_p$.
\end{corol}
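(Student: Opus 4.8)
The plan is to deduce the statement directly from Theorem \ref{GWCn} by a short contradiction argument, exploiting the symplectomorphism between the open dense subset $X_p$ and $(\complex^n, \omega_\Delta)$ that was established in the discussion preceding Lemma \ref{theoremembedding}.

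First I would recall that, by combining Theorem \ref{equivsymp} with Proposition \ref{prop2prima}, the open dense subset $X_p \subseteq M$ equipped with the restriction of $\omega$ is symplectomorphic to $(\complex^n, \omega_\Delta)$, where $\omega_\Delta = i \partial \bar\partial \log(\sum_k |\xi|^{2J_k})$ and $\{J_k\} = \Delta \cap \intero^n$ under the chosen normalization of the polytope at the vertex corresponding to $p$. This is precisely the setting of Theorem \ref{GWCn}, so the computed bound on the Gromov width of $(\complex^n, \omega_\Delta)$ is available.

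Next I would argue by contradiction. Suppose that some ball $B^{2n}(r)$ of radius $r > \sqrt{2\min_{j}(\max_k\{(J_k)_j\})}$ embeds symplectically into $(M, \omega)$ with image disjoint from the divisor $M \setminus X_p$. Then the image of this embedding is entirely contained in $X_p$, and composing with the symplectomorphism $X_p \cong (\complex^n, \omega_\Delta)$ produces a symplectic embedding of $(B^{2n}(r), \omega_{can})$ into $(\complex^n, \omega_\Delta)$. By the very definition of the Gromov width as a supremum of the $\pi r^2$ over embeddable balls, this forces $\pi r^2 \leq c_G(\complex^n, \omega_\Delta)$.

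Finally I would invoke Theorem \ref{GWCn} to bound the right-hand side by $2\pi \min_{j}(\max_k\{(J_k)_j\})$. Combining the two inequalities yields $\pi r^2 \leq 2\pi \min_{j}(\max_k\{(J_k)_j\})$, that is $r \leq \sqrt{2\min_{j}(\max_k\{(J_k)_j\})}$, contradicting the hypothesis on $r$. Hence every symplectically embedded ball of such radius must meet $M \setminus X_p$. The only point requiring care --- and the step I expect to be the main (though minor) obstacle --- is to ensure that the normalization of $\Delta$ at the vertex associated to $p$ matches the index set $\{J_k\}$ appearing in Theorem \ref{GWCn}, so that both statements refer to the same form $\omega_\Delta$; this is guaranteed by the $SL_n(\intero)$-transformation and translation described in Proposition \ref{prop2prima}.
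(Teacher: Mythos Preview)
Your argument is correct and matches the paper's treatment: the corollary is stated there as an immediate consequence of Theorem \ref{GWCn}, and what you have written is precisely the natural unpacking of that deduction via the symplectomorphism $X_p\cong(\complex^n,\omega_\Delta)$ established just before Lemma \ref{theoremembedding}. There is nothing to add.
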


Let 
$$\Delta = \{ x \in \real^n \ | \ \langle x, u_k \rangle \geq \lambda_k,  \ k = 1, \dots, d \}.$$
Then Lu proves in Corollary 1.4 of \cite{Lu} that the Gromov width of the corresponding toric manifold is bounded from above by
$$\Lambda(\Delta): = 2 \pi \max \{ - \sum_{i=1}^d \lambda_i a_i  \ | \ a_i \in \intero_{\geq 0}, \  \sum_{i=1}^d a_i u_i = 0,  \ 1 \leq \sum_{i=1}^d a_i \leq n+1 \}$$
in general, and by 
$$\gamma(\Delta): = 2 \pi \inf \{ - \sum_{i=1}^d \lambda_i a_i > 0  \ | \ a_i \in \intero_{\geq 0}, \sum_{i=1}^d a_i u_i = 0 \}$$
if the polytope $\Delta$ is Fano\footnote{It is easy to see that this condition is equivalent to the fact that the \K\ form on the manifold represents the first Chern class of a multiple of the anticanonical bundle.}, that is if there exist $m \in \real^n$ and $r > 0$ such that 
\begin{equation}\label{Fanopolytope}
r(\lambda_i + \langle m, u_i \rangle) = \pm 1, \ i=1, \dots, d, \ \ \ Int(r \cdot (m + \Delta)) \cap \intero^n = \{ 0 \}.
\end{equation}

\begin{Example}\label{esempi}\rm
Take  the polytope

\begin{eqnarray}
\Delta = \{ (x_1, x_2) \in \real^2 \ & | & \ x_1 \geq 0, x_2 \geq 0, x_1 - x_2 \geq -1, x_2 - x_1 \geq -1, \nonumber \\ & & x_1 - 2 x_2 \geq -3, x_2 \leq 3 \} \nonumber
\end{eqnarray}

\noindent which represents a \K\ class $\omega_{\Delta}$ on the Hirzebruch surface $S_2$  blown up at two points, denoted in the following by $\tilde S_2$.

Notice that $\Delta$ is of the kind  $\{ x \in \real^n \ | \ \langle x, u_k \rangle \geq \lambda_k,  \ k = 1, \dots, d \}$, where $u_1 = (1,0), u_2 = (0,1), u_3 = (1,-1), u_4 = (-1,1), u_5 = (1,-2), u_6 = (0, -1)$ and $\lambda_1 = 0, \lambda_2 = 0, \lambda_3 = -1, \lambda_4 = -1, \lambda_5 = -3, \lambda_6 = -3$.
We first show that $\Delta$ does not satisfy the above Fano condition (\ref{Fanopolytope}).
Indeed, these conditions read, for $m=(m_1, m_2)$ and $i = 1,2,3,6$,
$$rm_1 = \pm 1, \ \ rm_2 = \pm 1, \ \ r(-1 + m_1 - m_2) = \pm 1 \ \  r(-3-m_2) = \pm 1 .$$
Combining the second and the last condition we get the four possibilities (the signs have to be taken independently) $-3r-1 = +1, \ \ -3r-1 = -1, \ \ -3r+1 = +1, \ \ -3r+1 = -1$, that is $r = - \frac{2}{3}, r= 0, r = \frac{2}{3}$. Since $r >0$ the only possibility is $r = \frac{2}{3}$, and $m_2 = - \frac{3}{2}$. Replacing this in the third condition, and taking into account the first one, we have 
$$r(-1+m_1-m_2) = \frac{2}{3}(-1 + \frac{3}{2} \pm \frac{3}{2})$$
which is either $\frac{4}{3}$ or $-\frac{2}{3}$, so different from $\pm 1$ for any choice of the signs. This proves the claim. \label{paginafootnote}
Then Lu's estimate by $\gamma(\Delta)$ does not apply\footnote{At page 169 of \cite{Lu}, studying the case of the projective space blown up at one point, the author applies the same estimate valid for Fano polytopes also in the case when the polytope is not Fano: by looking at the proof, it turns out that this is possible because the projective space blown up at one point is Fano and any two \K\ forms on it are deformedly equivalent (see \cite{Salamon}, Example 3.7). This argument cannot be used here because $\tilde S_2$ is not Fano. As told to the second author by D. Salamon in a private communication, it is not known if the same result on the equivalence by deformation holds on the higher blowups of the projective spaces $\complex \projective^n$, with $n > 2$.}. Since $\sum_i a_i u_i = 0$ reads $a_1 = a_4-a_3-a_5, a_2 = a_3 - a_4 +2a_5+a_6$ we have
$$\Lambda(\Delta) = \max \{ 2 \pi (a_3 + a_4 + 3 a_5 + 3 a_6)  \ | \ a_i \in \intero_{\geq 0}, 1 \leq 2 a_5 + 2 a_6 + a_3 + a_4 \leq 3 \}.$$
It is easy to see that $\Lambda(\Delta) = 8 \pi$ (attained for $a_2 = a_4 = a_5 =1$ and $a_1=a_3=a_6=0$). We then get $c_G(\tilde S_2, \omega_{\Delta}) \leq 8 \pi$, while it is easy to see that our estimate (\ref{stimaGWCn}) yields $c_G(\complex^n, \omega_{\Delta}) \leq 6 \pi$.

Then, Corollary \ref{corollariodivisore} in this case states that any ball of radius strictly larger than $\sqrt 6$, simplectically embedded into $(\tilde S_2, \omega_{\Delta})$, must intersect the divisor.

\end{Example}

\begin{Example}\label{esempi2}\rm
Consider the family of polytopes

\begin{eqnarray}
\Delta(m)= \{ (x_1, x_2) \in \real^2 \ & | & \ 0 \leq x_1 \leq 4, 0 \leq x_2 \leq 4, -2 \leq x_1 - x_2 \leq 2, \nonumber \\ &  & 2 x_1 - x_2 \geq -\frac{2m}{m+1} \} \nonumber
\end{eqnarray}

\noindent which, for every natural number $m \geq 1$, represents a \K\ class $\omega_{\Delta(m)}$ on the projective plane blown up at three points and blown up again (at one of the new fixed points by the toric action), which we denote from now on by $M$.

Notice that $\Delta(m)$ is of the kind  $\{ x \in \real^n \ | \ \langle x, u_k \rangle \geq \lambda_k,  \ k = 1, \dots, d \}$, where $u_1 = (1,0), u_2 = (0,1), u_3 = (-1,1), u_4 = (-1,0), u_5 = (0,-1), u_6 = (1, -1), u_7 = (2, -1)$ and $\lambda_1 = 0, \lambda_2 = 0, \lambda_3 = -2, \lambda_4 = -4, \lambda_5 = -4, \lambda_6 = -2, \lambda_7= - \frac{2m}{m+1}$.

One easily sees by a straight calculation as in the previous example that $\Delta(m)$ does not satisfy the above Fano condition (\ref{Fanopolytope}). In fact, it is known (see for example Proposition 2.21 in \cite{Oda}) that $M$ is not Fano, so Lu's estimate by $\gamma(\Delta)$ does not apply (see also the footnote at page \pageref{paginafootnote}). Since $\sum_i a_i u_i = 0$ reads $a_1 = a_3+a_4-a_6-2a_7, a_2 = a_5 + a_6 + a_7 - a_3$ we have
$$\Lambda(\Delta) = \max \{ 2 \pi (2a_3 + 4 a_4 + 4 a_5 + 2 a_6 + \frac{2m}{m+1} a_7)  \}$$
over all the $a_i$'s in $\intero_{\geq 0}$ such that $1 \leq  a_3 + 2 a_4 + 2 a_5 + a_6 \leq 3$.

It is easy to see that $\Lambda(\Delta) = 2 \pi (6 + \frac{2m}{m+1})$ (attained for $a_3 = a_4 = a_7 =1$ and $a_1=a_2=a_5=a_6=0$). We have then $c_G(M, \omega_{\Delta(m)}) \leq 2 \pi (6 + \frac{2m}{m+1})$, while it is easy to see that our estimate (\ref{stimaGWCn}) yields $c_G(\complex^n, \omega_{\Delta(m)}) \leq 8 \pi$ for every $m \geq 1$ (in fact, we need first to multiply $\Delta(m)$ by $m+1$ in order to get an integral polytope for which $\min_{j=1, \dots, n} \left( \max_{k}\{ (J_k)_j \} \right) = 4(m+1)$, and then we rescale by $\frac{1}{m+1}$, and use the fact that $c_G(M, \lambda \omega) = \lambda c_G(M, \omega)$).

Then, Corollary \ref{corollariodivisore} in this case states that any ball of radius strictly larger than $2 \sqrt 2$, simplectically embedded into $(M, \omega_{\Delta(m)})$, must intersect the divisor.

\end{Example}

\begin{remar}
\rm It is worth to notice that, for the complex projective space $\complex \projective^n$ endowed with the Fubini-Study form $\omega_{FS} = i \log(\sum_i |Z_i|^2)$, the Gromov width is known to be equal to $2\pi$ and in fact it is attained by embedding simplectically an open ball of radius $\sqrt 2$ without intersecting the divisor (more precisely, one can see that the image of the symplectic embedding $(\complex^n, \omega_{FS}) \rightarrow (\complex^n, \omega_0)$ given by (\ref{embeddingtheorem}) is exactly a ball of radius $\sqrt 2$).
\end{remar}

\appendix
\section{Toric manifolds}

\subsection{Toric manifolds as compactifications of $\complex^n$}\label{supapp1}

Let us recall the following

\begin{defin}\label{toric}
A {\it toric variety} is a complex variety $M$ containing an open dense subset biholomorphic to $(\complex^*)^n$ and such that the canonical action of $(\complex^*)^n$ on itself by $(\alpha_1, \dots, \alpha_n) (\beta_1, \dots, \beta_n) = (\alpha_1 \beta_1, \dots, \alpha_n \beta_n)$ extends to a holomorphic action on the whole $M$. 
\end{defin}

A toric variety can be described combinatorially by means of {\it fans of cones}. In detail, by the {\it cone $\sigma = \sigma(u_1, \dots, u_m)$ in $\real^n$ generated by the vectors $u_1, \dots, u_m \in \intero^n$} we mean the set 
$$\{ x \in \real^n \ | \ x = \sum_{i=1}^m c_i u_i, \ c_i \geq 0 \}$$
of linear combinations of $u_1, \dots, u_m$ with non-negative coefficients. The $u_i$'s are called the {\it generators} of the cone.  The {\it dimension} of a cone $\sigma = \sigma(u_1, \dots, u_m)$ is the dimension of the linear subspace of $\real^n$ spanned by $\{ u_1, \dots, u_m \}$.

We will always assume that our cones are {\it convex}, i.e. that they do not contain any straight line passing through the origin, and that the generators of a cone are linearly independent.

The {\it faces} of a cone $\sigma = \sigma(u_1, \dots, u_m)$ are defined as the cones generated by the subsets of $\{ u_1, \dots, u_m \}$. By definition, the cone generated by the empty set is the origin $\{ 0 \}$.

\begin{defin}
A {\it fan $\Sigma$ of cones} in $\real^n$ is a set of cones such that

\begin{enumerate}
\item[(i)] for any $\sigma \in \Sigma$ and any face $\tau$ of $\sigma$, we have $\tau \in \Sigma$;
\item[(ii)] any two cones in $\Sigma$ intersect along a common face.
\end{enumerate}
\end{defin}

Let us now recall how one can construct from a fan $\Sigma$ a toric variety.

Let $\{u_1, \dots, u_d \}$, $u_k = (u_{k1}, \dots, u_{kn}) \in {\intero}^n$, be the union of all the generators of the cones in $\Sigma$. For any cone $\sigma = \sigma( \{ u_i \}_{i \in I}) \in \Sigma$, $I \subseteq \{1, \dots, d \}$, let us denote
$${\complex}^d_{\sigma} = \{ (z_1, \dots, z_d) \in {\complex}^d \ | \ z_i = 0 \Leftrightarrow i \in I \} .$$
Notice that if $\sigma = \sigma(\emptyset)$ is the cone consisting of the origin alone, then  ${\complex}^d_{\sigma} = ({\complex}^*)^d$.
Now, let ${\complex}^d_{\Sigma} = \bigcup_{\sigma \in \Sigma} {\complex}^d_{\sigma}$ and $K_{\Sigma}$ be the kernel of the surjective homomorphism
$$\pi: (\complex^*)^d \rightarrow (\complex^*)^n, \ \ \pi(\alpha_1, \dots, \alpha_d) = (\alpha_1^{u_{11}} \cdots \alpha_d^{u_{d1}}, \dots,  \alpha_1^{u_{1n}} \cdots \alpha_d^{u_{dn}}) .$$

\begin{defin}\label{deftoricquotient}
The {\it toric variety $X_{\Sigma}$ associated to $\Sigma$} is defined to be the quotient  
$X_{\Sigma} = \frac{{\complex}^d_{\Sigma}}{K_{\Sigma}}$
of ${\complex}^d_{\Sigma}$ for the action of $K_{\Sigma}$ given by the restriction of the canonical action $(\alpha_1, \dots, \alpha_d) (z_1, \dots, z_d) = (\alpha_1 z_1, \dots, \alpha_d z_d)$ of $(\complex^*)^d$ on $\complex^d$ .
\end{defin}

The importance of this construction consists in the fact that any toric variety $M$ of complex dimension $n$ can be realized as $M = X_{\Sigma}$ for some fan $\Sigma$ in $\real^n$ (see Section 1.4 in \cite{Ful}).

Notice that, by definition of $K_{\Sigma}$, we have $\frac{({\complex}^*)^d}{K_{\Sigma}} \simeq ({\complex}^*)^n$. So we have a natural action of this complex torus on $X_{\Sigma}$ given by
\begin{equation}\label{action}
[(\alpha_1, \dots, \alpha_d)][(z_1, \dots, z_d)] = [(\alpha_1 z_1, \dots, \alpha_d z_d)].
\end{equation}

From now on, and throughout this section, we will assume that $X_{\Sigma}$ is a compact, smooth manifold. From a combinatorial point of view, it is known (\cite{Ful}, Chapter 2) that:

\begin{enumerate}

\item[(i)] $X_{\Sigma}$ is compact if and only if the {\it support $|\Sigma| = \cup_{\sigma \in \Sigma} \sigma$} of $\Sigma$ equals $\real^n$. 

\item[(ii)] $X_{\Sigma}$ is a smooth complex manifold if and only if for each $n$-dimensional cone $\sigma$ in $\Sigma$ its generators form a $\intero$-basis of $\intero^n$.

\end{enumerate}

\label{paginasmooth}
Under these assumptions, we have the following well-known result.

\begin{prop}\label{prop1}
 Let $X_{\Sigma}$ be a compact, smooth toric manifold of complex dimension $n$. Then, for each $p \in X_{\Sigma}$ fixed by the torus action (\ref{action}) there exists an open neighbourhood $X_p$ of $p$, dense in $X_{\Sigma}$, containing the complex torus $\frac{({\complex}^*)^d}{K_{\Sigma}} \simeq ({\complex}^*)^n$ and a biholomorphism $\phi_p: X_p \rightarrow \complex^n$ such that $\phi_p(p)=0$ and that the restriction of the torus action (\ref{action}) to $X_p$ coincides, via $\phi_p$, with the canonical action of $(\complex^*)^n$ on $\complex^n$ by componentwise multiplication.
In particular, any compact, smooth toric manifold of complex dimension $n$ is a compactification of $\complex^n$.
\end{prop}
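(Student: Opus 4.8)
The plan is to realize the given fixed point $p$ as the distinguished point of a maximal cone of $\Sigma$ and to produce $\phi_p$ from an explicit slice of the quotient construction of Definition \ref{deftoricquotient}. Write $T:=(\complex^*)^d/K_\Sigma\simeq(\complex^*)^n$ for the acting torus and $p=[z_0]$, and set $I=\{i:(z_0)_i=0\}$; the definition of $\complex^d_\Sigma$ forces $\sigma_I:=\sigma(\{u_i\}_{i\in I})$ to be a cone of $\Sigma$, and since the generators of a cone are linearly independent, $\dim\sigma_I=|I|$. I first claim $|I|=n$. Indeed, if $|I|<n$, then since $|\Sigma|=\real^n$ (compactness) the cone $\sigma_I$ is a proper face of a maximal cone, so there is an index $j\notin I$ with $\sigma(\{u_i\}_{i\in I}\cup\{u_j\})\in\Sigma$. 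Scaling the $j$-th coordinate by $\alpha^{(s)}=(1,\dots,s,\dots,1)$ (with $s$ in slot $j$) gives, via (\ref{action}), the orbit points $[\alpha^{(s)}]\cdot p=[\alpha^{(s)}z_0]$; if $p$ were fixed these would all equal $p$, yet as $s\to 0$ they converge to $[z_0']$, where $z_0'$ is $z_0$ with its $j$-th coordinate set to $0$, whose vanishing set is $I\cup\{j\}\neq I$, so $[z_0']\neq p$ — a contradiction. Hence $|I|=n$; after relabelling the generators I may assume $\sigma:=\sigma(u_1,\dots,u_n)$ with $I=\{1,\dots,n\}$, and by the smoothness criterion (the generators of each $n$-dimensional cone form a $\intero$-basis of $\intero^n$) the integer matrix $U=(u_{kj})_{k,j=1}^n$ lies in $GL_n(\intero)$.

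Next I would set $X_p:=\{[z]\in X_\Sigma : z_{n+1}\neq 0,\dots,z_d\neq 0\}$, a condition invariant under $K_\Sigma\subseteq(\complex^*)^d$ and hence well defined; any such $z$ has vanishing set contained in $\{1,\dots,n\}$, i.e. a face of $\sigma$, so $[z]$ really lies in $X_\Sigma$. This $X_p$ is open and contains the big torus $(\complex^*)^d/K_\Sigma\simeq(\complex^*)^n$ (every $[z]$ with all coordinates nonzero qualifies), which is dense in $X_\Sigma$ by Definition \ref{toric}; hence $X_p$ is dense and contains the complex torus. The key step is to show that the slice $\{z_{n+1}=\dots=z_d=1\}$ meets each $K_\Sigma$-orbit in $X_p$ exactly once: for $z$ with $z_{n+1},\dots,z_d\neq0$, an element $\kappa=(\beta_1,\dots,\beta_d)\in K_\Sigma$ carrying $z$ into the slice must have $\beta_k=z_k^{-1}$ for $k>n$, and then $\beta_1,\dots,\beta_n$ are determined by the $n$ equations $\prod_{k=1}^n\beta_k^{u_{kj}}=\prod_{k>n}z_k^{u_{kj}}$, $j=1,\dots,n$, which have a unique solution in $(\complex^*)^n$ precisely because $U\in GL_n(\intero)$. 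This gives a biholomorphism $\phi_p:X_p\to\complex^n$ whose inverse is the manifestly holomorphic map $w\mapsto[(w_1,\dots,w_n,1,\dots,1)]$ (well defined, since every vanishing pattern among the first $n$ coordinates is a face of $\sigma$, hence in $\Sigma$). Finally $\phi_p(p)=0$, as $p$ has representative $(0,\dots,0,1,\dots,1)$.

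It remains to match the torus action, and this is the step requiring the most care. Writing a point of $X_p$ as $[(w,1,\dots,1)]$ and acting by $\tau=[\alpha]$ via (\ref{action}) gives $[(\alpha_1w_1,\dots,\alpha_nw_n,\alpha_{n+1},\dots,\alpha_d)]$; renormalizing the last $d-n$ coordinates to $1$ by the unique $\kappa\in K_\Sigma$ above, the first $n$ coordinates become $(c_1w_1,\dots,c_nw_n)$, where $c\in(\complex^*)^n$ is the unique solution of $\prod_{k=1}^n c_k^{u_{kj}}=\pi(\alpha)_j$, $j=1,\dots,n$ (again uniquely solvable because $U\in GL_n(\intero)$). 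Thus $\phi_p$ carries the action of $\tau$ to the diagonal map $w\mapsto(c_1w_1,\dots,c_nw_n)$, and since $\alpha\mapsto c$ is exactly the monomial automorphism of $(\complex^*)^n$ expressing $\pi(\alpha)$ in the basis $u_1,\dots,u_n$, identifying $T\simeq(\complex^*)^n$ through this $\sigma$-adapted isomorphism turns the $T$-action on $X_p$ into the standard componentwise action of $(\complex^*)^n$ on $\complex^n$. The final assertion that $X_\Sigma$ is a compactification of $\complex^n$ is then immediate from the existence of $\phi_p$ on the dense open set $X_p$. The main obstacle throughout is one and the same linear-algebraic fact: smoothness forces the generator matrix $U$ of each maximal cone into $GL_n(\intero)$, and this integrality is exactly what makes both the slice normalization and the action computation uniquely solvable over $(\complex^*)^n$.
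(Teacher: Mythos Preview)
Your proof is correct and takes essentially the same approach as the paper: the open set $X_p$, the inverse $w\mapsto[(w_1,\dots,w_n,1,\dots,1)]$, and the torus-action verification all coincide with the paper's construction. The only differences are presentational --- you locate the maximal cone attached to $p$ directly via a limit argument (the paper instead starts from maximal cones and recovers all fixed points by noting that the $X_\sigma$'s cover $X_\Sigma$), and you describe $\phi_p$ as a slice of the $K_\Sigma$-quotient where the paper writes out an explicit monomial formula for $\phi_\sigma$, but both produce the same biholomorphism.
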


\begin{proof} Let $\sigma = \sigma(u_{j_1}, \dots, u_{j_n})$ be an $n$-dimensional cone in $\Sigma$, and let $\{ j_{n+1}, \dots, j_{d} \} = \{ 1, \dots, d\} \setminus \{ j_{1}, \dots, j_{n} \}$. Let us consider the open dense subset
$$X_{\sigma} = \frac{\bigcup_{\tau \subseteq \sigma} \complex^d_{\tau}}{K_{\Sigma}} =  \{ [(z_1, \dots, z_d)] \in X_{\Sigma} \ | \ z_{j_{n+1}}, \dots, z_{j_{d}} \neq 0 \}.$$

We are going to define a biholomorphism $\phi_{\sigma}: X_{\sigma} \rightarrow \complex^n$. Recall that, by the assumption of smoothness, $u_{j_1}, \dots, u_{j_n}$ form a $\intero$-basis of $\intero^n$, or equivalently (with a further permutation if necessary) the matrix

$$U = \left( \begin{array}{ccc}
u_{{j_1}1} & \dots & u_{{j_n}1} \\
 \dots & \dots & \dots \\
u_{{j_1}n} & \dots & u_{{j_n}n}
\end{array} \right)$$
belongs to $SL(n, \intero)$. Let $U^{-1} = \left( \begin{array}{ccc}
w_{1 1} & \dots & w_{n1} \\
 \dots & \dots & \dots \\
w_{1 n} & \dots & w_{nn}
\end{array} \right)$ and let
\linebreak $\left( \begin{array}{ccc}
v_{j_{n+1} 1} & \dots & v_{j_{d}1} \\
 \dots & \dots & \dots \\
v_{j_{n+1} n} & \dots & v_{j_{d}n}
\end{array} \right)$
be the matrix in $M_{n \ d-n}(\intero)$ obtained by deleting from 
$$\left( \begin{array}{ccc}
w_{1 1} & \dots & w_{n1} \\
 \dots & \dots & \dots \\
w_{1 n} & \dots & w_{nn}
\end{array} \right)\left( \begin{array}{ccc}
u_{1 1} & \dots & u_{d1} \\
 \dots & \dots & \dots \\
u_{1 n} & \dots & u_{dn}
\end{array} \right)$$
the $j$-th column, for $j = j_1, \dots, j_n$.

We claim that 
\begin{equation}\label{defphi}
\phi_{\sigma}([(z_1, \dots, z_d)]) = (z_{j_{1}} z_{j_{n+1}}^{v_{j_{n+1} 1}} \cdots z_{j_{d}}^{v_{j_{d} 1}}, \dots, z_{j_{n}} z_{j_{n+1}}^{v_{j_{n+1} n}} \cdots z_{j_{d}}^{v_{j_{d} n}})
\end{equation}
defines the required biholomorphism. In order to verify this, notice first that if $(\alpha_1, \dots, \alpha_d) \in K_{\Sigma}$ then, by definition, for every $k=1, \dots, n$, we have
$$1 = (\alpha_1^{u_{11}} \cdots \alpha_d^{u_{d1}})^{w_{1k}} \cdots (\alpha_1^{u_{1n}} \cdots \alpha_d^{u_{dn}})^{w_{nk}} = \alpha_{j_k} \alpha_{j_{n+1}}^{v_{j_{n+1} k}} \cdots \alpha_{j_d}^{v_{j_{d}k}}$$
so that 
\begin{equation}\label{paramK}
\alpha_{j_k} = \alpha_{j_{n+1}}^{-v_{j_{n+1} k}} \cdots \alpha_{j_d}^{-v_{j_{d}k}}, \ \  k=1, \dots, n.
\end{equation}
For any $\alpha_{j_{n+1}}, \dots, \alpha_{j_d} \in \complex^*$, these equations give a parametric representation of  $K_{\Sigma}$, using which it is easy to see that (\ref{defphi}) is well defined. More in detail, if $[(z_1, \dots, z_d)] = [(w_1, \dots, w_d)] $ then there exist $\alpha_{j_{n+1}}, \dots, \alpha_{j_d} \in \complex^*$ such that $w_{j_{n+1}} = \alpha_{j_{n+1}} z_{j_{n+1}}, \dots, w_{j_d} = \alpha_{j_d} z_{j_d}$ and

$$w_{j_1} = \alpha_{j_{n+1}}^{- v_{j_{n+1} 1}} \cdots \alpha_{j_d}^{- v_{j_d 1}} z_1, \ \dots \ , w_{j_n} = \alpha_{j_{n+1}}^{- v_{j_{n+1} n}} \cdots \alpha_{j_d}^{- v_{j_d n}} z_{j_n} $$
from which it is immediate to see that $\phi_{\sigma}[(z_1, \dots, z_d)] = \phi_{\sigma}[(w_1, \dots, w_d)]$.

Moreover, one sees that

\begin{equation}\label{invphi}
\psi_{\sigma}: \complex^n \rightarrow X_{\sigma}, \ \ \ \psi_{\sigma}(\xi_1, \dots, \xi_n) = [(\psi_1, \dots, \psi_d)]
\end{equation}
where
$$\psi_{j_1} = \xi_1, \dots, \psi_{j_n} = \xi_n, \ \ \psi_{j_{n+1}} = \cdots = \psi_{j_d} = 1$$ and 
is the inverse of $\phi_{\sigma}$. Indeed, on the one hand it is clear that $\phi_{\sigma} \circ \psi_{\sigma} = id_{\complex^n}$. On the other hand, for every $[(z_1, \dots, z_d)] \in X_{\sigma}$ we have $(\psi_{\sigma} \circ \phi_{\sigma})([z_1, \dots, z_d]) = [(\psi_1, \dots, \psi_d)]$ where
$$\psi_{j_k} = z_{j_{k}} z_{j_{n+1}}^{v_{j_{n+1} k}} \cdots z_{j_{d}}^{v_{j_{d} k}}, \ \ k = 1, \dots, n$$
and $\psi_{j_{n+1}} = \cdots = \psi_{j_d} = 1$. But $[(\psi_1, \dots, \psi_d)] = [(z_1, \dots, z_d)]$ since $(z_1, \dots, z_d) = (\alpha_1, \dots, \alpha_d)(\psi_1, \dots, \psi_d)$ for the element $(\alpha_1, \dots, \alpha_d) \in K_{\Sigma}$ given by $\alpha_{j_{n+1}} = z_{j_{n+1}}, \dots, \alpha_{j_{d}} = z_{j_{d}}$ (recall that, by definition of $X_{\sigma}$, we have $z_{j_{n+1}}, \dots, z_{j_d} \neq 0$) and 
$$\alpha_{j_k} = z_{j_{n+1}}^{-v_{j_{n+1} k}} \cdots z_{j_{d}}^{-v_{j_{d} k}}, \ \ k = 1, \dots, n.$$
This proves the claim. 
Now, by the very definition of $X_{\sigma}$ it is clear that it contains the complex torus $\frac{({\complex}^*)^d}{K_{\Sigma}}$ and that $X_{\sigma}$ is invariant by the action (\ref{action}). In fact, one has $\phi_{\sigma}\left( \frac{({\complex}^*)^d}{K_{\Sigma}} \right) = (\complex^*)^n$ and, if $\phi_{\sigma}[\alpha_1, \dots, \alpha_d] = (a_1, \dots, a_n)$, $\phi_{\sigma}[z_1, \dots, z_d] = (\xi_1, \dots, \xi_n)$, then $\phi_{\sigma}[\alpha_1 z_1, \dots, \alpha_d z_d] = (a_1 \xi_1, \dots, a_n \xi_n)$, which means that the action of $\frac{({\complex}^*)^d}{K_{\Sigma}}$ on $X_{\sigma}$ corresponds, via $\phi_{\sigma}$, to the canonical action of $(\complex^*)^n$ on $\complex^n$.

As a consequence, since the only fixed point for this canonical action is the origin, we have that the only point of $X_{\sigma}$ fixed by the action of $\frac{({\complex}^*)^d}{K_{\Sigma}}$ is the point $p = [z_1, \dots, z_d]$ having $z_{j_1} = \cdots = z_{j_n} = 0$.

So $X_{\sigma}$ turns out to be a neighbourhood $X_p$ of the fixed point $p$ which satisfies all the requirements of the statement  of the Proposition.

Since the $X_{\sigma}$'s, when $\sigma$ runs over all the $n$-dimensional cones of $\Sigma$, cover $X_{\Sigma}$, we get in this way all the fixed points by the torus action, and this concludes the proof of the Proposition.

\end{proof}

\subsection{Toric bundles and Kodaira embeddings}\label{appendix2}

Let us recall how one constructs combinatorially the line bundles on a toric manifold $X_{\Sigma}$.

\begin{defin}\label{support}
Let $\Sigma$ be a fan of cones in $\real^n$. A {\it $\Sigma$-linear support function} (or simply a support function when the context is clear) is a continuous function $g: \real^n \rightarrow \real$ such that

\begin{enumerate}
\item[(i)] on every $n$-dimensional cone $\sigma \in \Sigma$, g is the restriction of a linear function $g_{\sigma}: \real^n \rightarrow \real$;
\item[(ii)] $g$ has integer values on $\intero^n$.
\end{enumerate}

\end{defin}
A support function is clearly determined by the values it has on the generators of the cones.

One associates to any such function $g$ a line bundle, denoted ${X_{\Sigma}}_g$, on the manifold $X_{\Sigma}$ and defined as  ${X_{\Sigma}}_g = \frac{\complex^d_{\Sigma} \times \complex}{K_{\Sigma}}$
where $\complex^d_{\Sigma}$, $K_{\Sigma}$ are as in Definition \ref{deftoricquotient} and the quotient comes from the action of $K_{\Sigma}$ on $\complex^d_{\Sigma} \times \complex$ given by 

$$(\alpha_1, \dots, \alpha_d) \cdot (z_1, \dots, z_d, z_{d+1}) = (\alpha_1 z_1, \dots, \alpha_d z_d, \alpha_1^{-g(u_1)} \cdots \alpha_d^{-g(u_d)} z_{d+1}).$$

The projection $p:{X_{\Sigma}}_g \rightarrow X_{\Sigma}$ is just given by $p([z_1, \dots, z_{d+1}]) = [z_1, \dots, z_d]$, which is clearly well-defined by the very definition of the equivalence relations involved.

It is known that ${X_{\Sigma}}_g$ is very ample if and only if $g$ is {\it strictly convex}, i.e. it fulfills the following requirements:

\begin{enumerate}
\item[(a)] for every $v_1, v_2 \in \real^n$, $t \in [0,1]$, one has $g(tv_1 + (1-t)v_2) \geq t g(v_1) + (1-t) g(v_2)$ (i.e. $-g$ is convex);
\item[(b)] distinct $n$-dimensional cones $\sigma$ give distinct functions $g_{\sigma}$. 
\end{enumerate}

A nice representation of the very ample line bundle $p:{X_{\Sigma}}_g \rightarrow X_{\Sigma}$, encoding combinatorially both the structure of $X_{\Sigma}$ and the function $g$, is given by the convex polytope
\begin{equation}\label{polytope}
\Delta_g = \{ x \in \real^n \ | \ \langle x, u_i \rangle \geq g(u_i), \ \ i=1, \dots, d \}
\end{equation}
where $u_1, \dots, u_d$ are the generators of $\Sigma$.

Every $k$-dimensional face of $\Delta_g$ is given by the intersection of $n-k$ hyperplanes $\langle x, u_i \rangle = g(u_i)$, for $i \in I \subseteq \{1, \dots, d \}$ such that $\{ u_i \}_{i \in I}$ generates an $(n-k)$-dimensional cone of $\Sigma$. In particular, the vertices of $\Delta_g$ correspond to the $n$-dimensional cones of $\Sigma$ and then (see the proof of Proposition \ref{prop1}) to the fixed points of the torus action.

Conversely, every convex polytope $\Delta = \{ x \in \real^n \ | \ \langle x, u_i \rangle \geq \lambda_i, \ \ i=1, \dots, d \}$ with the property that the normal vectors $u_i$ to the faces meeting in a given vertex  form a $\intero$-basis of $\intero^n$ determine a toric manifold together with a very ample line bundle.

\smallskip

We are now ready to prove the following

\begin{prop}\label{prop2}
Let $p \in X_{\Sigma}$ be a fixed point for the torus action and $X_p$, $\phi_p: X_p \rightarrow \complex^n$ be as in Proposition \ref{prop1}. 

The restriction to $X_p$ of the Kodaira embedding $i_g: X_{\Sigma} \rightarrow \complex \projective^{N-1}$ associated to ${X_{\Sigma}}_g$ writes, in the coordinates given by $\phi_p$,  as

$$i_g|_{X_{p}} \circ\phi_p^{-1}: \complex^n \rightarrow \complex \projective^{N-1},  \ \ \xi \mapsto [\dots, \xi_1^{x_1} \cdots \xi_n^{x_n}, \dots]$$
where $(x_1, \dots, x_n)$ runs over all the points with integral coordinates in $\Delta$, being $\Delta$ the polytope in $\real^n$ obtained by $\Delta_g$ via the transformation in $SL_n(\intero)$ and the translation which send the vertex of $\Delta_g$ corresponding to $p$ to the origin and the corresponding edge to the edge generated by the vectors $e_1, \dots, e_n$ of the canonical basis of $\real^n$.
\end{prop}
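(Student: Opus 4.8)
The plan is to write the Kodaira embedding $i_g$ explicitly in terms of the monomial sections of ${X_\Sigma}_g$ indexed by the lattice points of $\Delta_g$, and then to pull these monomials back through the chart $\psi_\sigma = \phi_\sigma^{-1}$ produced in the proof of Proposition \ref{prop1}; the asserted formula and the shape of the index polytope will drop out after a bookkeeping of exponents.

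First I would make the global sections of ${X_\Sigma}_g$ explicit. Writing $\sigma = \sigma(u_{j_1}, \dots, u_{j_n})$ for the $n$-dimensional cone corresponding to $p$, I claim that to each $m \in \Delta_g \cap \intero^n$ there corresponds the monomial
$$s_m(z_1, \dots, z_d) = \prod_{i=1}^d z_i^{\langle m, u_i \rangle - g(u_i)},$$
and that the $s_m$ are precisely the holomorphic sections realizing $i_g$. Two points have to be checked. For the well-definedness on $X_\Sigma$, observe that if $(\alpha_1, \dots, \alpha_d) \in K_\Sigma$ then $\prod_i \alpha_i^{u_{ik}} = 1$ for every $k$ by the definition of $K_\Sigma$, hence $\prod_i \alpha_i^{\langle m, u_i\rangle} = \prod_k \big(\prod_i \alpha_i^{u_{ik}}\big)^{m_k} = 1$ and therefore $s_m(\alpha \cdot z) = \prod_i \alpha_i^{-g(u_i)}\, s_m(z)$, which is exactly the equivariance law defining a section of ${X_\Sigma}_g$. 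For holomorphy, the condition $m \in \Delta_g$ is by (\ref{polytope}) nothing but the system $\langle m, u_i\rangle - g(u_i) \geq 0$ for all $i$, so every exponent is non-negative and $s_m$ is a genuine holomorphic monomial. Granting that these monomials form a basis of the space of sections, the embedding reads $i_g([z]) = [\dots, s_m(z), \dots]$ with $m$ ranging over $\Delta_g \cap \intero^n$.

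Next I would compute $i_g \circ \psi_\sigma$. By (\ref{invphi}) the chart inverse sends $(\xi_1, \dots, \xi_n)$ to the class of the point with $z_{j_k} = \xi_k$ for $k = 1, \dots, n$ and $z_{j_l} = 1$ for $l = n+1, \dots, d$. Substituting into $s_m$, every factor indexed by $j_{n+1}, \dots, j_d$ equals $1$, and one is left with
$$s_m(\psi_\sigma(\xi)) = \prod_{k=1}^n \xi_k^{\langle m, u_{j_k}\rangle - g(u_{j_k})}.$$
Thus the homogeneous coordinate attached to $m$ is $\xi_1^{x_1}\cdots \xi_n^{x_n}$ with $x_k := \langle m, u_{j_k}\rangle - g(u_{j_k})$, which is the claimed expression. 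It remains only to identify the set of exponents. The assignment $m \mapsto x = (x_1, \dots, x_n)$ is the affine map $x = U^{T} m - (g(u_{j_1}), \dots, g(u_{j_n}))$, where $U$ is the matrix of Proposition \ref{prop1} whose columns are $u_{j_1}, \dots, u_{j_n}$; since $U \in SL(n, \intero)$, so is $U^{T}$, and the translation vector lies in $\intero^n$ because $g$ is integer-valued. Hence $m \mapsto x$ is the composition of an $SL(n, \intero)$-transformation and an integral translation, carrying $\Delta_g \cap \intero^n$ bijectively onto $\Delta \cap \intero^n$. Finally, the vertex $m_0$ of $\Delta_g$ corresponding to $p$ satisfies $\langle m_0, u_{j_k}\rangle = g(u_{j_k})$ for $k = 1, \dots, n$, so $x(m_0) = 0$; and since $x_k = \langle m - m_0, u_{j_k}\rangle$, the $n$ facets through this vertex become $\{ x_k \geq 0 \}$, i.e. the tangent cone at the vertex maps to the first orthant, whose edges are generated by $e_1, \dots, e_n$. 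This is exactly the normalization in the statement, and the proof is complete.

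I expect the only genuinely delicate point to be the first step, namely establishing that the monomials $s_m$ with $m \in \Delta_g \cap \intero^n$ are precisely the sections giving the Kodaira embedding: the equivariance is a short computation, but the identification of global holomorphy with the defining inequalities of $\Delta_g$, together with the fact that these monomials span the whole space of sections (so that $N = \#(\Delta_g \cap \intero^n)$), is the substantive input. Once this is in hand, the substitution through $\psi_\sigma$ and the verification that $m \mapsto x$ realizes the advertised $SL(n, \intero)$-transformation are purely computational.
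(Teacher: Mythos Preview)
Your proof is correct and follows essentially the same approach as the paper's: both identify the global sections of ${X_\Sigma}_g$ as equivariant monomials, substitute through the chart $\psi_\sigma$ from Proposition~\ref{prop1}, and then verify that the resulting exponent set is the image of $\Delta_g \cap \intero^n$ under the affine map $m \mapsto U^T m - (g(u_{j_1}),\dots,g(u_{j_n}))$. Your presentation is somewhat more economical in that you index the sections by $m \in \Delta_g \cap \intero^n$ from the outset via $s_m = \prod_i z_i^{\langle m, u_i\rangle - g(u_i)}$, whereas the paper first writes an explicit local trivialization, parametrizes monomial sections by tuples $(x_1,\dots,x_d)$ satisfying the linear constraints~(\ref{conditionsection}), and only afterwards recognizes the index set as the transformed polytope; the substance, including the point you correctly flag as delicate (that these monomials span $H^0({X_\Sigma}_g)$), is the same.
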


\begin{proof} For the sake of simplicity and without loss of generality, we can assume that the fixed point $p$ corresponds (in the sense of the proof of Proposition \ref{prop1}) to the $n$-dimensional cone of $\Sigma$ generated by $u_1, \dots, u_n$, so that $X_p =  \{ [(z_1, \dots, z_d)] \in X_{\Sigma} \ | \ z_{n+1}, \dots, z_d \neq 0 \}$. Given the line bundle $p: {X_{\Sigma}}_g \rightarrow X_{\Sigma}$, we clearly have
$$p^{-1}(X_p) = \{ [(z_1, \dots, z_d, z_{d+1})] \in {X_{\Sigma}}_g \ | \ z_{n+1}, \dots, z_d \neq 0 \}.$$

An explicit trivialization $f: p^{-1}(X_p) \rightarrow X_p \times \complex$ of ${X_{\Sigma}}_g$ on $X_p$ is given by
$$f([(z_1, \dots, z_d, z_{d+1})]) = ([z_1, \dots, z_d], z_{d+1} z_{n+1}^{c_{n+1}} \cdots z_{d}^{c_{d}})$$
where, for every $j = n+1, \dots, d$, 
$$c_j = g(u_{j}) - \sum_{k=1}^n v_{j k} g(u_k)$$
and the $v_{jk}$'s are defined in the proof of Proposition \ref{prop1}.

Indeed,  $f$ is well defined because $z_{n+1}, \dots, z_d \neq 0$ and because, if $[(z_1, \dots, z_d, z_{d+1})] = [(w_1, \dots, w_d, w_{d+1})]$ then, for some $(\alpha_1, \dots, \alpha_d) \in K_{\Sigma}$,
$$w_{d+1} w_{n+1}^{c_{n+1}} \cdots w_{d}^{c_{d}} = (z_{d+1} \alpha_1^{-g(u_1)} \cdots \alpha_d^{-g(u_d)}) z_{n+1}^{c_{n+1}} \cdots z_{d}^{c_{d}}  \alpha_{n+1}^{c_{n+1}} \cdots \alpha_{d}^{c_{d}} =$$
$$= z_{d+1} z_{n+1}^{c_{n+1}} \cdots z_{d}^{c_{d}} \cdot \alpha_1^{-g(u_1)} \cdots \alpha_n^{-g(u_n)} \alpha_{n+1}^{- \sum_{k=1}^n v_{n+1 k} g(u_k)} \cdots \alpha_d^{- \sum_{k=1}^n v_{d k} g(u_k)} $$
$$= z_{d+1} z_{n+1}^{c_{n+1}} \cdots z_{d}^{c_{d}}$$
by (\ref{paramK}) in the proof of Proposition \ref{prop1}.

The inverse of $f$ is clearly given by $f^{-1}: X_p \times \complex \rightarrow p^{-1}(X_p)$, 
$$f^{-1}([z_1, \dots, z_d], z) = [(z_1, \dots, z_d, z z_{n+1}^{-c_{n+1}} \cdots z_{d}^{-c_{d}})],$$ 
which is well defined by the same arguments as above.

A section of $p: {X_{\Sigma}}_g \rightarrow X_{\Sigma}$  is determined by a function $F = F(z_1, \dots, z_d)$ which satisfies 
$$F(\alpha_1 z_1, \dots, \alpha_d z_d) = \alpha_1^{-g(u_1)} \cdots \alpha_d^{-g(u_d)} F(z_1, \dots, z_d).$$
for every $(\alpha_1, \dots, \alpha_d) \in K_{\Sigma}$. Indeed, this is exactly the condition which assures that $s: X_{\Sigma} \rightarrow {X_{\Sigma}}_g$, $s([z_1, \dots, z_d]) = [(z_1, \dots, z_d, F(z_1, \dots, z_d))]$ is well-defined.

By a straight calculation and by (\ref{paramK}), a basis for the space of global sections is given by the polynomials $F(z_1, \dots, z_d) = z_1^{x_1} \cdots z_d^{x_d}$, $x_i \geq 0$, which satisfy
\begin{equation}\label{conditionsection}
x_j + g(u_j) = v_{j1} (x_1 + g(u_1)) + \cdots + v_{jn} (x_n + g(u_n)), \ \ \ j = n+1, \dots, d
\end{equation}
where the $v_{jk}$'s are defined in the proof of Proposition \ref{prop1}. We will refer to this basis as the {\it monomial basis}.
Let $\{F_0, \dots, F_N \}$ be the monomial basis. Then, by the celebrated theorem of Kodaira, the map
\begin{equation}\label{ig}
X_{\Sigma} \stackrel{i_g}{\longrightarrow} \complex P^{N}, \ \ \ [(z_1, \dots, z_d)] \mapsto [F_0(z_1, \dots, z_d), \dots, F_N(z_1, \dots, z_d)].
\end{equation}
yields an embedding of $X_{\Sigma}$ in the complex projective space.
Restricting $i_g$ to $X_p$ and composing with $\phi_p^{-1}: \complex^n \rightarrow X_p$ we get

\begin{equation}\label{embCn}
(\xi_1, \dots, \xi_n) \mapsto [F_0(\xi_1, \dots, \xi_n, 1 \dots, 1), \dots, F_N(\xi_1, \dots, \xi_n, 1 \dots, 1)].
\end{equation}

Now, since the $x_i$'s are all non-negative integers, conditions (\ref{conditionsection}) are equivalent to
\begin{equation}\label{newconditionsection}
\langle x + g_u, v_j \rangle \geq g(u_j), \ \ j=1, \dots, d,
\end{equation}
where $x = (x_1, \dots, x_n)$, $g_u = (g(u_1), \dots, g(u_n))$ and for $j=1, \dots, n$ we are setting $v_j = e_j$ (the canonical basis of $\real^n$). 
Since $e_1, \dots, e_n, v_{n+1}, \dots, v_d$ are the images of $u_1, \dots, u_n, u_{n+1}, \dots, u_d$ via the map \linebreak $A = \left( \begin{array}{ccc}
u_{{1}1} & \dots & u_{{n}1} \\
 \dots & \dots & \dots \\
u_{{1}n} & \dots & u_{{n}n}
\end{array} \right)^{-1} \in SL_n(\intero)$, one easily sees that (\ref{newconditionsection}) are the defining equations of the polytope $\Delta = {}^T A^{-1} (\Delta_g) - g_u$, obtained from $\Delta_g = \{x \in \real^n \ | \ \langle x, u_i \rangle \geq g(u_i) \}$  by the map in $SL_n(\intero)$ and the translation which send the edge given by the faces having $u_1, \dots, u_n$ as normals (i.e. the edge at the vertex corresponding to $p$) to the edge at the origin having the vectors of the canonical basis as edge.
Then the embedding (\ref{embCn}) turns out to be the map
$$\complex^n \rightarrow \complex P^{N}, \ \ \  (\xi_1, \dots, \xi_n) \mapsto [\dots, \xi_1^{x_1} \cdots \xi_n^{x_n}, \dots]$$
where $(x_1, \dots, x_n)$ runs over all the points with integral coordinates in $\Delta$, as required.
\end{proof}

\begin{remar}\label{puntipolitopo}\rm 
Notice that the transformed polytope $\Delta$ represents, up to isomorphism, the same line bundle and the same toric manifold as $\Delta_g$, because we are always free to apply to the fan $\Sigma$ a transformation in $SL_n(\intero)$ (see, for example, Proposition VII.1.16 in \cite{Au}) and we can always add to $g$ an integral linear function $\intero^n \rightarrow \intero$, which correspond to a translation of the polytope (this comes from the fact that two bundles ${X_{\Sigma}}_g$, ${X_{\Sigma}}_{g'}$ associated to $\Sigma$-linear support functions $g$, $g'$ on $\Sigma$ are isomorphic if and only if $g-g': \real^n \rightarrow \real$ is a linear function).

In fact, it is well-known that if a toric manifold endowed with a very ample line bundle is represented by a polytope $\Delta$, then the monomials $a_1^{y_1} \cdots a_n^{y_n}$, for $(y_1, \dots, y_n) \in \Delta \cap \intero^n$ and $(a_1, \dots, a_n) \in (\complex^*)^n$, give the restriction of a Kodaira embedding (associated to the given line bundle) to the complex torus $(\complex^*)^n$ contained in $X_{\Sigma}$. What we have seen here in detail is exactly that this embedding can be extended to $\complex^n$ if the polytope has the origin as vertex and the edge at the origin is generated by the canonical basis of $\real^n$, and coincides with (\ref{embCn}) in this case.

\end{remar}


\begin{thebibliography}{99}

\bibitem{abreu}
M. Abreu,
\emph{\K \ geometry of toric manifolds in symplectic coordinates},
Symplectic and contact topology: interactions and perspectives (Toronto, ON/Montreal, QC, 2001), 1–24, 
Fields Inst. Commun., 35, Amer. Math. Soc., Providence, RI, 2003.

\bibitem{Au} M. Audin, {\em Torus actions on symplectic manifolds}, Progress in Mathematics, Vol. 93 (2004).

\bibitem{batyrev} V. Batyrev, {\em Quantum cohomology rings of toric manifolds}, Ast\'erisque No. 218 (1993), 9-34.

\bibitem{BIRAN99}
 P. Biran,
 \emph{A stability property of symplectic packing},
 Invent. Math. 136 (1999) 123-155. 
 
\bibitem{boc} S. Bochner,
{\em Curvature in Hermitian metric},
Bull. Amer. Math. Soc. 53 (1947), 179-195. 


\bibitem{bo} A. Borel,
{\em Linear algebraic groups},
second ed., GTM n. 126
Springer--Verlag, New-York (1991). 


\bibitem{ca} E. Calabi,
{\em Isometric Imbeddings of Complex Manifolds},
Ann. of Math. 58 (1953), 1-23. 


\bibitem{castro}
A. C. Castro,
\emph{Upper bound for the Gromov width of coadjoint orbits of type A},  
arXiv:1301.0158v1




\bibitem{delzant} T. Delzant, \emph{Hamiltoniens periodiques et image convexe de l'application moment}, 
Bull. Soc. Math. France, 116 (1988), 315-339.

\bibitem{donaldson} S. K. Donaldson, {\em \K \ geometry on toric manifolds, and some other manifolds with large symmetry}, Handbook of geometric analysis. No. 1, 29–75, Adv. Lect. Math. (ALM), 7, Int. Press, Somerville, MA, 2008. 

\bibitem{Ful} W. Fulton, {\em Introduction to toric varieties}, Princeton University press (1993).

\bibitem{GMS} G. Giachetta, L. Mangiarotti, G. A. Sardanashvili, {\em Geometric and algebraic topological methods in quantum mechanics}, World Scientific (2005).


\bibitem{GROMOV85} M. Gromov, \emph{Pseudoholomorphic curves in symplectic manifolds}, 
Invent. Math. 82 (1985), no. 2, 307-347.

\bibitem{Gu} V. Guillemin, {\em Moment maps and combinatorial invariants of hamiltonian $T^n$-spaces}, Birkhauser 1994.


\bibitem{hu} D. Hulin,
{\em Sous-varietes complexes d'Einstein de
l'espace projectif},
Bull. Soc. math. France 124 (1996), 277-298.

\bibitem{hu1} D. Hulin,
{\em \K\--Einstein metrics and
projective embeddings},
J. Geom. Anal. 10 (2000), 525-528.



\bibitem{GWgrass}
Y. Karshon, S.   Tolman,
\emph{The Gromov width of complex Grassmannians},
Algebr. Geom. Topol. 5 (2005), 911-922.


\bibitem{LZ09} A.  Loi, M. Zedda,
{\em K\"ahler-Einstein submanifolds of the infinite dimensional projective space},
Math. Ann.   350 (2011), 145-154.

\bibitem{DLH} A. J. Di Scala, A. Loi  and  H. Hishi,
{\em \K\ immersions of homogeneous \K\ manifolds into complex space forms},
Asian J. of Math.s  Vol. 16 No. 3 (2012), 479-488.

\bibitem{LZ} A. Loi, F. Zuddas, {\em Symplectic maps of complex domains into complex space forms}, Journal of Geometry and Physics 58 (2008), 888-899.



\bibitem{Lu} G. Lu, {\em Symplectic capacities of toric manifolds and related results}, Nagoya Math. J. Vol. 181 (2006), 149-184.





\bibitem{Oda}
T. Oda,
\emph{Convex bodies and algebraic geometry},
Springer Verlag (1985).



\bibitem{Salamon}
D. Salamon,
\emph{Uniqueness of symplectic structures}, Acta Math. Vietnam. 38 (2013), no. 1, 123-144


\bibitem{Siu}
Y. T. Siu,
\emph{The existence of \K-Einstein metrics on manifolds with positive
anticanonical line bundle and a suitable finite symmetry group,},
Ann. Math. 127 (1988), 585-627.





\bibitem{Tian} G. Tian,
\emph {\K-Einstein metrics on complex surfaces with $c_1(M)$ positive},
Math. Ann. 274 (1986), 503-516.





\end{thebibliography}
\end{document}